\title[a toric weak Fano 4-fold to be deformed to Fano]
{A sufficient condition for a toric weak Fano 4-fold\\ 
to be deformed to a Fano manifold}
\author{Hiroshi Sato} 
\subjclass[2020]{Primary 14M25; Secondary 14D15, 14J45.}
\date{2021/4/22, version 0.26}
\keywords{toric manifolds, weak Fano manifolds, 
deformation theory}
\address{Department of Applied Mathematics, Faculty of Sciences, 
Fukuoka University, 8-19-1, Nanakuma, Jonan-ku, Fukuoka 814-0180, Japan}
\email{hirosato@fukuoka-u.ac.jp}
\newcommand{\Pic}[0]{{\operatorname{Pic}}}
\newcommand{\G}[0]{{\operatorname{G}}}
\newcommand{\Hom}[0]{{\operatorname{Hom}}}
\newtheorem{thm}{Theorem}[section]
\newtheorem{lem}[thm]{Lemma}
\newtheorem{prop}[thm]{Proposition}
\newtheorem*{claim}{Claim}
\theoremstyle{definition}
\newtheorem{ex}[thm]{Example}
\newtheorem{defn}[thm]{Definition}
\newtheorem{rem}[thm]{Remark}
\newtheorem*{ack}{Acknowledgments}       
\begin{document}
\bibliographystyle{amsalpha+}

\begin{abstract}
In this paper, we introduce the notion of 
toric special weak Fano manifolds, 
which have only special primitive crepant contractions. 
We study their structure, 
and in particular completely classify smooth toric special 
weak Fano $4$-folds. 
As a result, we can confirm that almost every smooth toric special weak Fano 
$4$-fold is a weakened Fano manifold, that is, 
a weak Fano manifold which can be deformed to a Fano manifold. 
\end{abstract}

\thispagestyle{empty}

\maketitle

\tableofcontents
\section{Introduction} %%%%%%%%%%%%%%%%%%%
The deformation theory of Hirzebruch surfaces are well-known. The deformation 
type of the Hirzebruch surface $F_a$ is determined by the parity of its degree $a$. 
In particular, the weak del Pezzo surface $F_2$ 
is deformed to the del Pezzo surface $F_0\cong\mathbb{P}^1\times\mathbb{P}^1$, 
and this phenomenon is very interesting. As a generalization of this result, 
Minagawa studied the deformations of primitive crepant contractions 
on a smooth weak Fano $3$-fold in \cite{minagawa}, and introduced 
the following notion of weakened Fano manifolds in \cite{minagawaweakened}:

\begin{defn}\label{weakeneddef}
Let $X$ be a smooth weak Fano $d$-fold over 
the complex number field $\mathbb{C}$. Then, $X$ is 
a {\em weakened} Fano manifold if the following hold:
\begin{enumerate}
\item 
$X$ is not a Fano manifold.
\item 
There exists a one-parameter deformation 
\[
\mathfrak{f}:\mathcal{X}\to\Delta_\epsilon:=
\left\{c\in\mathbb{C}\,\left|\,|c|<\epsilon\right.\right\}
\]
for sufficiently small $\epsilon>0$ such that the central fiber 
$\mathcal{X}_0:=\mathfrak{f}^{-1}(0)$ is 
isomorphic to $X$, while any general fiber 
$\mathcal{X}_t:=\mathfrak{f}^{-1}(t)$ is a Fano manifold 
for any $t\in\Delta_\epsilon\setminus\{0\}$. 
\end{enumerate}
\end{defn}

For $d=2$, every smooth weak del Pezzo surface is always a weakened del Pezzo 
surface if it is not a del Pezzo surface. 
Smooth toric weakened Fano $3$-folds were classified in \cite{sato2} by 
using the characterization of smooth weakened Fano $3$-folds in 
\cite{minagawa}, which says that 
a smooth weak Fano $3$-fold $X$ is a weakened Fano manifold 
if and only if every primitive crepant contraction on $X$ is of $(0,2)$-{\em type}, 
and $X$ is not a Fano manifold. 
In this paper, based on this characterization theorem and 
the deformation theory of toric manifolds in \cite{laface}, 
we define smooth toric {\em special} weak Fano $d$-folds 
(see Definitions \ref{specon} and \ref{mizukawa}). 
These varieties have common geometric 
properties (see Propositions \ref{3nenagumi} and \ref{ichiomain}). As an application, we 
classify all the smooth toric special weak Fano $4$-folds in Section \ref{sec4fold} 
as follows: 
\begin{thm}[Theorem \ref{4foldthm}]%\label{4foldthm}
There exist exactly $26$ smooth toric special weak Fano $4$-folds 
which do not decompose into a direct product of 
lower-dimensional varieties. 
\end{thm}
Moreover, by using the deformation families 
of toric manifolds constructed in \cite{laface}, we can see 
almost every smooth toric special weak Fano $4$-fold is deformation 
equivalent to a Fano manifold as follows: 
\begin{thm}[Theorem \ref{deformmain}]
Let $X$ be a smooth toric special weak Fano $4$-fold. 
In all but five cases, 
%If $X$ is not isomorphic to $\mathcal{Z}_{18}$, $\mathcal{Z}_{21}$, $\mathcal{Z}_{23}$, $\mathcal{Z}_{25}$ or $\mathcal{Z}_{26}$, 
%then 
$X$ is deformation equivalent to a toric Fano manifold. 
In particular, $X$ is a 
smooth toric weakened Fano $4$-fold with the exception of 
two unknown cases. 
\end{thm}
Thus, the condition to be smooth toric 
special weak Fano $4$-folds is also a sufficient condition for 
smooth toric weakened Fano $4$-folds under some assumption. 
This result seems to be the first step toward the study of 
the deformation theory of primitive crepant contractions 
on higher-dimensional smooth weak Fano varieties. 

This paper is organized as follows: 
In Section \ref{prepre}, we collect some basic results of toric geometry 
such as primitive collections and relations, toric Mori theory, 
toric bundles and so on. In Section \ref{specialfanosec}, 
we introduce the concept of smooth toric special 
weak Fano varieties, which are the main objects in this paper, and 
study their structure. In Section \ref{sec4fold}, 
we completely classify all the smooth toric special 
weak Fano $4$-folds, and investigate their deformations 
in Section \ref{deformsec}.

\begin{ack}
The author was partially supported by JSPS KAKENHI 
Grant Number JP18K03262. 
He would like to thank Professor Hirokazu Nasu, who 
kindly answered his questions about deformation theory. 
He would also like to thank Professors Osamu Fujino and 
Yukari Ito, who gave him useful comments 
about the descriptions of Proposition \ref{thanksfujino} and 
Lemma \ref{anticanonical}. 
Finally, 
he thanks the referee very much 
for many useful comments and suggestions. 
\end{ack}

\section{Preliminaries}\label{prepre}%%%%%%%%%%%%%%%%%%%%%%
In this section, we prepare basic results in the toric geometry and 
the toric Mori theory. For the details, please see 
\cite{cls}, \cite{fulton} and \cite{oda} for toric geometry, and 
see \cite{fujino-sato}, \cite{matsuki} and \cite{reid} for 
toric Mori theory. Also refer \cite{bat1}, \cite{bat2}, 
\cite{casagrande} and \cite{sato1} for the concepts of 
primitive collections and relations, which are very useful for our theory. 
We will work over the complex number field $\mathbb{C}$ throughout 
this paper. 

Let $N:=\mathbb{Z}^d$, $M:=\Hom_{\mathbb{Z}}(N,\mathbb{Z})$, 
$N_{\mathbb{R}}:=N\otimes_{\mathbb{Z}}\mathbb{R}$ and 
$M_{\mathbb{R}}:=M\otimes_{\mathbb{Z}}\mathbb{R}$. 
$\langle n_1,\ldots,n_r\rangle$ 
stands for the cone generated by $n_1,\ldots,n_r\in N_{\mathbb{R}}$. 
For any two cones $\sigma,\sigma'\subset N_{\mathbb{R}}$, 
put 
\[
\sigma+\sigma':=\left\{n+n'\in N_{\mathbb{R}}\,\left|\,n\in\sigma
\mbox{ and }
n'\in\sigma'\right.\right\}.
\]
For a fan $\Sigma$ in $N$, we denote by $X=X_\Sigma$ 
the toric $d$-fold associated to $\Sigma$. 
For a smooth complete fan $\Sigma$, put 
\[
\G(\Sigma):=\left\{\mbox{the primitive generators of 
$1$-dimensional cones in $\Sigma$}\right\}\subset N.
\]
For any $x\in\G(\Sigma)$, there is the corresponding torus invariant 
divisor $D_x$ on $X$. We have the following fundamental 
exact sequence
\[
0\to M\to \mathbb{Z}^{\G(\Sigma)}\to \Pic (X)\to 0,
\]
where we regard $\mathbb{Z}^{\G(\Sigma)}$ as the 
group $\sum_{x\in\G(\Sigma)}\mathbb{Z}D_x$ 
of torus invariant divisors on $X$, naturally. In its dual exact sequence
\[
0\gets N\xleftarrow{\psi} \left(\mathbb{Z}^{\G(\Sigma)}
\right)^*\gets
{\rm A}^1(X)\gets 0,
\]
where ${\rm A}^1(X)$ is the group of numerical $1$-cycles on $X$, 
we have 
\[
\psi\left((a_x)_{x\in\G(\Sigma)}\right)=
\sum_{x\in\G(\Sigma)}a_xx
\]
for $(a_x)_{x\in\G(\Sigma)}\in \left(\mathbb{Z}^{\G(\Sigma)}
\right)^*$. 
Therefore, we obtain the following isomorphism of abelian groups:
\[
{\rm A}^1(X)\cong 
\left\{(a_x)_{x\in\G(\Sigma)}\in \left(\mathbb{Z}^{\G(\Sigma)}\right)^*\,\left|\,
\sum_{x\in\G(\Sigma)}a_xx=0\right.\right\}\subset \left(\mathbb{Z}^{\G(\Sigma)}\right)^*.
\]
Thus, we can regard a linear relation among elements in $\G(\Sigma)$ as 
a numerical $1$-cycle on $X$. In particular, the following relation is important for our theory:
\begin{defn}[{\cite[Definitions 2.6, 2.7 and 2.8]{bat1}}]\label{pcpc}
Let $X=X_\Sigma$ be a smooth complete toric variety. 
We call a nonempty subset $P\subset \G(\Sigma)$ a {\em primitive collection} in 
$\Sigma$ if $P$ does not generate a cone in $\Sigma$, while 
any proper subset of $P$ generates a cone in $\Sigma$. 

For a primitive collection $P=\{x_1,\ldots,x_r\}$, there exists the unique cone 
$\sigma(P)\in\Sigma$ such that $x_1+\cdots+x_r$ is contained in the 
relative interior of $\sigma(P)$. Put $\sigma(P)\cap\G(\Sigma)=
\{y_1,\ldots,y_s\}$. Then, we have a linear relation
\[
x_1+\cdots+x_r=a_1y_1+\cdots+a_sy_s\ (a_1,\ldots,a_s\in\mathbb{Z}_{>0}).
\]
We call this relation the {\em primitive relation} of $P$. 
Thus, by the above argument, 
we obtain the numerical $1$-cycle $r(P)\in{\rm A}^1(X)$ for any 
primitive collection $P\subset\G(\Sigma)$. 
We put $\deg P:=r-(a_1+\cdots+a_s)$. 
We remark that $\deg P$ is 
the intersection number $\left(-K_X\cdot r(P)
\right)$.
\end{defn}
Primitive collections and relations are very important in toric Mori theory as follows:

\begin{thm}[{\cite[Theorem 2.15]{bat1}}]\label{toricconethm}
Let $X=X_\Sigma$ be a smooth projective toric variety. 
Then, the Kleiman-Mori cone ${\rm NE}(X)$ of $X$ is 
generated by the $1$-cycles associated to primitive relations. Namely,
\[
{\rm NE}(X)=\sum_{P:\mbox{ primitive collection in }\Sigma} 
{\mathbb{R}_{\ge 0}}r(P)\subset {\rm N}^1(X):={\rm A}^1(X)\otimes \mathbb{R}.
\]
When $r(P)$ is contained in an extremal ray of ${\rm NE}(X)$, 
we call $P$ an {\em extremal} primitive collection. 
\end{thm}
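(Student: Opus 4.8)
The plan is to prove the two inclusions of $\NE(X)=\sum_{P}\mathbb{R}_{\ge 0}\,r(P)$ separately, where $P$ runs over the primitive collections in $\Sigma$; write $\mathcal{C}:=\sum_{P}\mathbb{R}_{\ge 0}\,r(P)\subseteq \N^1(X)$ for the cone on the right. Since $X$ is projective, $\NE(X)$ is a closed rational polyhedral cone, equal to the dual $\mathrm{Nef}(X)^{\vee}$ of the nef cone, and it is generated by the finitely many classes $[V(\tau)]$ of the torus invariant curves $V(\tau)$ attached to the walls (codimension one cones) $\tau\in\Sigma$; this is standard toric Mori theory. Also $\mathcal{C}$ is finitely generated, hence closed. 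Thus it suffices to prove (I) $r(P)\in\NE(X)$ for every primitive collection $P$, and (II) $[V(\tau)]\in\mathcal{C}$ for every wall $\tau$.

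For (I): let $D=\sum_{x\in\G(\Sigma)}a_xD_x$ be nef, with its (upper convex, piecewise linear) support function $\varphi=\varphi_D$. Writing the primitive relation of $P=\{x_1,\dots,x_r\}$ as $x_1+\cdots+x_r=a_1y_1+\cdots+a_sy_s$ with $y_1,\dots,y_s$ spanning $\sigma(P)\in\Sigma$, using that $\varphi$ is linear on $\sigma(P)$, and using the pairing $D\cdot C=\sum_x a_xc_x$ between a divisor $D$ and a $1$-cycle $C=(c_x)$, one computes
\[
D\cdot r(P)=\varphi\Bigl(\sum_{i=1}^{r}x_i\Bigr)-\sum_{i=1}^{r}\varphi(x_i)\ \ge\ 0 ,
\]
the inequality being upper convexity of $\varphi$. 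So $r(P)$ pairs nonnegatively with every nef divisor, whence $r(P)\in\mathrm{Nef}(X)^{\vee}=\NE(X)$; this yields (I), and in particular $\mathcal{C}\subseteq\NE(X)$.

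The substance is (II). Fix a wall $\tau=\langle u_1,\dots,u_{d-1}\rangle$, with the two maximal cones through it being $\langle u_1,\dots,u_{d-1},u^{+}\rangle$ and $\langle u_1,\dots,u_{d-1},u^{-}\rangle$; smoothness forces the wall relation to take the shape $u^{+}+u^{-}=\sum_{i=1}^{d-1}b_iu_i$ with $b_i\in\mathbb{Z}$, and $[V(\tau)]$ is the $1$-cycle it defines. If all $b_i\ge 0$, then $\{u^{+},u^{-}\}$ cannot span a cone of $\Sigma$: otherwise $u^{+}+u^{-}$ would lie in the relative interior both of that cone and of the face $\langle u_i : b_i>0\rangle$ of $\tau$, forcing $u^{+}\in\tau$ by uniqueness of the cone of $\Sigma$ whose relative interior contains a given point. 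Hence $\{u^{+},u^{-}\}$ is a primitive collection with primitive relation $u^{+}+u^{-}=\sum_{b_i>0}b_iu_i$, and $[V(\tau)]=r(\{u^{+},u^{-}\})\in\mathcal{C}$. If some $b_i<0$, set $Q:=\{u^{+},u^{-}\}\cup\{u_i : b_i<0\}$; moving the negative terms to the left displays $p:=\sum_{b_i>0}b_iu_i$ both as a strictly positive combination of the elements of $Q$ and as an interior point of the face $\langle u_i : b_i>0\rangle$ of $\tau$, and the same relative-interior argument shows $Q$ does not span a cone of $\Sigma$, so $Q$ contains a primitive collection $P$. I would then subtract a suitable positive multiple $c\,r(P)$ from $[V(\tau)]$, chosen so that $[V(\tau)]-c\,r(P)$ is again an effective class whose wall-curve decomposition involves only walls of strictly smaller ``negative content'' $\sum_{b_i<0}|b_i|$, and induct on that quantity. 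Combined with (I), this gives $\mathcal{C}\subseteq\NE(X)\subseteq\mathcal{C}$; the rational coefficients produced along the way are harmless because $\mathcal{C}$ is a cone.

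The step I expect to be the main obstacle is the induction just sketched: after removing $c\,r(P)$ one must check that the remainder is still a nonnegative combination of torus invariant curves, so that the induction hypothesis applies, and that the complexity measure genuinely drops. Controlling this forces one to pin down the primitive collection $P\subseteq Q$ precisely enough to locate the cone $\sigma(P)$ — that is, to know where $\sum_{x\in P}x$ lands. The remaining ingredients (Kleiman duality, the support function computation for (I), and the generation of $\NE(X)$ by wall curves) are routine.
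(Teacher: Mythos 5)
The paper itself does not prove this statement; it imports it verbatim from Batyrev (\cite[Theorem 2.15]{bat1}), so there is no internal proof to compare with, and your proposal has to be judged on its own. The easy portions are fine: the reduction to (I) and (II) via the toric cone theorem (NE$(X)$ closed, rational polyhedral, generated by wall curves), the support-function computation showing $D\cdot r(P)\ge 0$ for nef $D$, and the first case of (II), where all wall-relation coefficients are nonnegative, so that $\{u^+,u^-\}$ is a primitive collection whose primitive relation \emph{is} the wall relation. Up to that point you have correctly reproduced the standard (Batyrev-style) strategy.

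The genuine gap is exactly the step you flag yourself: the case of a wall relation $u^++u^-=\sum_i b_iu_i$ with some $b_i<0$. Producing a primitive collection $P\subseteq\{u^+,u^-\}\cup\{u_i: b_i<0\}$ is fine, but everything after that is unsubstantiated. First, $\sigma(P)$ --- hence the $1$-cycle $r(P)$ --- is not located at all, so you cannot even write down what $[V(\tau)]-c\,r(P)$ is. Second, there is no argument that a positive $c$ exists making $[V(\tau)]-c\,r(P)$ effective; membership in NE$(X)$ of the remainder is precisely what is needed to re-expand it in wall classes, and nothing in the setup guarantees it. Third, the proposed complexity measure (``negative content of the walls appearing in the wall-curve decomposition of the remainder'') is not well defined --- decompositions of a class into wall curves are far from unique --- and no monotonicity is proved; with real coefficients $c$ there is also no a priori termination. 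This is not a routine verification: it is the actual content of Batyrev's theorem, and it is the part that is treated carefully in the literature (Batyrev's own argument inducts on the intersection number with a fixed ample divisor, which is a positive integer, and rests on an explicit identity expressing a wall class as $r(P)$ plus a nonnegative combination of wall classes of strictly smaller degree; alternatively one can argue via Reid's description of extremal walls, as in Casagrande's paper \cite{casagrande}, showing every extremal ray is spanned by some $r(P)$ and then using polyhedrality). As it stands, your argument establishes only the inclusion $\sum_P\mathbb{R}_{\ge0}r(P)\subseteq{\rm NE}(X)$ together with the special case of walls with nonnegative relation coefficients, so the proof is incomplete.
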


\begin{rem}\label{odasense}
It is well-known that for a smooth projective toric variety $X=X_\Sigma$, 
${\rm NE}(X)$ is generated by torus invariant curves on $X$. 
More strongly, for any curve $C\subset X$, there exist 
torus invariant curves $C_1,\ldots,C_l$ and positive {\em integers} 
$b_1,\ldots,b_l\in\mathbb{Z}_{> 0}$ such that 
\[
C=b_1C_1+\cdots+b_lC_l
\]
in ${\rm N}^1(X)$. The proof is contained in the Japanese version of 
\cite{oda} (also see \cite[Proposition 2.3]{sasuke}). 
\end{rem}

For an {\em extremal} primitive collection 
$P\subset\G(\Sigma)$, the associated extremal 
contraction can be described by using 
its primitive relation. Let 
$x_1+\cdots+x_r=a_1y_1+\cdots+a_sy_s$ 
be its primitive relation, where $G:=\{x_1,\ldots,x_r,y_1,\ldots,
y_s\}\subset\G(\Sigma)$ and $a_1,\ldots,a_s
\in\mathbb{Z}_{> 0}$, 
and $\varphi_R:X\to\overline{X}$ 
the extremal contraction associated to the extremal ray 
\[
R=\mathbb{R}_{\ge 0}r(P)\subset{\rm NE}(X).
\]
The following is the key to constructing $\varphi_R$. 
Also, this proposition is essential for our theory in this paper: 
\begin{prop}[{\cite[Theorem 2.2]{casagrande}}]\label{contractible}   %%%%%%%%     
%Let $X=X_{\Sigma}$ be a smooth projective toric 
%variety, and 
%$x_1+\cdots+x_r=a_1y_1+\cdots+a_sy_s$ 
%be an {\em extremal} primitive relation in $\Sigma$, 
%where $x_1,\ldots,x_r,y_1,\ldots,
%y_s\in\G(\Sigma)$ and $a_1,\ldots,a_s
%\in\mathbb{Z}_{> 0}$.
%Then, 
For any cone $\tau
\in\Sigma$ such that $\left(\tau\cap\G(\Sigma)
\right)
\cap
%\{x_1,\ldots,x_r,y_1,\ldots,y_s\}
G=
\emptyset$ and $\langle y_1,\ldots,y_s\rangle
+\tau\in\Sigma$, we have 
\[
\langle x_1,\ldots,\stackrel{\vee}{x_{i}},
\ldots,x_r,y_1,\ldots,y_s
\rangle +\tau\in\Sigma
\]
for any $1\le i\le r$. 
\end{prop}
$\varphi_R$ is constructed as follows. 
We divide into three cases:
%Then, the following hold:
\begin{enumerate}
\item $\varphi_R$ is a Fano contraction 
$\Longleftrightarrow$ $s=0$. Let $\mathcal{N}\subset N_{\mathbb{R}}$ 
be the subspace spanned by $\{x_1,\ldots,x_r\}$. Then, 
$\varphi_R$ is induced by the surjective morphism 
$N_{\mathbb{R}}\to N_{\mathbb{R}}/\mathcal{N}$. 
$X$ has a $\mathbb{P}^{r-1}$-bundle structure over $\overline{X}$. 
\item $\varphi_R$ is a divisorial contraction 
$\Longleftrightarrow$ $s=1$. Let 
\[
\overline{\Sigma}:=\left\{\sigma\in\Sigma\,\left|\,
y_1\not\in \sigma\cap\G(\Sigma)
\right.\right\}\cup
\]
\[
\left\{\,
\langle x_1,\ldots,x_r
\rangle +\tau
\left|\,
\tau
\in\Sigma,\ \left(\tau\cap\G(\Sigma)
\right)
\cap G=
\emptyset\mbox{ and }
\langle y_1\rangle
+\tau\in\Sigma
\right.\right\}.
\]
Then, $\overline{\Sigma}$ is the fan associated to $\overline{X}$. 
$\varphi_R$ contracts the torus invariant divisor corresponding to 
the cone $\langle y_1\rangle\in\Sigma$ to 
the $(d-r)$-dimensional torus invariant subvariety corresponding to 
the cone $\langle x_1,\ldots,x_r \rangle\in\overline{\Sigma}$. 
\item $\varphi_R$ is a small contraction 
$\Longleftrightarrow$ $s\ge 2$. 
Let 
\[
\overline{\Sigma}:=\left\{\sigma\in\Sigma\,\left|\,
\{ y_1,\ldots,y_s\}\not\subset\sigma\cap\G(\Sigma)
\right.\right\}\cup
\]
\[
\left\{\,
\langle x_1,\ldots,x_r,y_1,\ldots,y_s
\rangle +\tau
\left|\,
\tau
\in\Sigma,\ \left(\tau\cap\G(\Sigma)
\right)
\cap G=
\emptyset\mbox{ and }
\langle y_1,\ldots,y_s\rangle
+\tau\in\Sigma
\right.\right\}.
\]
Then, $\overline{\Sigma}$ is the fan associated to $\overline{X}$. 
$\varphi_R$ contracts the $(d-s)$-dimensional 
torus invariant subvariety corresponding to 
the cone $\langle y_1,\ldots,y_s\rangle\in\Sigma$ to 
the $(d-r-s+1)$-dimensional torus invariant subvariety corresponding to 
the cone 
\[
\langle x_1,\ldots,x_r,y_1,\ldots,y_s \rangle
\in\overline{\Sigma}. 
\]
$\overline{X}$ has a non-$\mathbb{Q}$-factorial singularity. 
\end{enumerate}
Moreover, the following hold: 
$R$ is $K_X$-negative 
if and only if $\deg P>0$, while 
$\varphi_R$ is a primitive {\em crepant} 
contraction 
if and only if $\deg P=0$. 
%The following is a condition 
%that a primitive collection 
%is {\em contractible}, that is, 
%we can construct a contraction morphism associated to 
%a primitive collection. 

\medskip %%%%%%%%%%%%%%%%%%%%%%%%%%%%%%%%%%%%

The following is a characterization of 
a toric bundle structure using primitive 
collections: %The proof is omitted. 
Let $X=X_{\Sigma}$ be a smooth projective toric $d$-fold. 
Suppose that there exist nonempty subsets
$G_1\subset \G(\Sigma)$ and $G_2\subset \G(\Sigma)$ 
such that $G_1\cap G_2=\emptyset$, $G_1\cup G_2=\G(\Sigma)$ and 
for any primitive collection $P\subset \G(\Sigma)$, 
either $P\subset G_1$ or $P\subset G_2$ holds. 
This condition means that $\Sigma$ is equivalent to the fan of 
a direct product as a simplicial complex 
(see e.g. \cite[Proposition 3.1.14]{cls}). 
Put 
\[
\Sigma_1:=\left\{\sigma\in\Sigma\,\left|\,\sigma\cap\G(\Sigma)\subset G_1\right.\right\}
\mbox{ and }
\Sigma_2:=\left\{\sigma\in\Sigma\,\left|\,\sigma\cap\G(\Sigma)\subset G_2\right.\right\}. 
\]
Then, for some $d_1,d_2\in\mathbb{Z}_{>0}$ such that $d=d_1+d_2$, 
$\Sigma_1\subset\Sigma$ is a subfan whose every 
maximal cone is $d_1$-dimensional, while 
$\Sigma_2\subset\Sigma$ is a subfan whose every 
maximal cone is $d_2$-dimensional. 
Suppose further that 
the support 
\[
\left|\Sigma_1\right|:=\bigcup_{\sigma\in\Sigma_1}\sigma
\] 
of $\Sigma_1$ is a $d_1$-dimensional 
subspace in $N_{\mathbb{R}}$. 
Then, $X$ has an $X_1$-bundle structure 
induced by the surjective morphism $N_{\mathbb{R}}\to N_{\mathbb{R}}/|\Sigma_1|$, 
where $X_1$ is a smooth projective toric $d_1$-fold associated to 
the $d_1$-dimensional fan $\Sigma_1$ in $|\Sigma_1|$. 

In order to test the above condition, the following easy lemma is useful:
\begin{lem}\label{pcbdl}
$|\Sigma_1|\subset N_{\mathbb{R}}$ is a $d_1$-dimensional 
subspace if and only if for any primitive collection $P\subset G_1$, 
we have $\sigma(P)\cap\G(\Sigma)\subset G_1$. 
\end{lem}

%%%%%%%%%%%%%%%%%%%%%%%%%%%%%%%%%%%%%%%%%%%%

%%%%%%%%%%%%%%%%
%%%%%%%%%%%%%%%%%%%%%%%%%%
\section{Toric special weak Fano manifolds} \label{specialfanosec}%%%%%%%
%%%%%%%%%%%%%%%%%%%%%%%%%%
%%%%%%%%%%%%%%
Fano manifolds and weak Fano manifolds 
are fundamental objects in birational 
geometry, and are simply defined as follows:
\begin{defn}
Let $X=X_{\Sigma}$ be a smooth projective 
toric $d$-fold. We call $X$ a toric {\em Fano} 
(resp. {\em weak Fano}) 
manifold if its anticanonical 
divisor $-K_X$ is {\em ample} (resp. {\em nef}). 
By Definition \ref{pcpc} and 
Theorem \ref{toricconethm}, we have the 
following equivalence: 
$X$ is a Fano (resp. weak Fano) manifold if and only if 
$\deg P>0$ (resp. $\deg P\ge 0$) for any 
primitive collection $P$ in $\Sigma$.
\end{defn}

We introduce the following subclass of toric weak Fano manifolds 
in order to settle the characterization of toric weakened Fano manifolds.

\begin{defn}\label{specon}%%%%%%%%%%%%%%%%%%%%%%%
Let $X=X_{\Sigma}$ be a smooth toric weak Fano $d$-fold, and 
$\varphi: X\to\overline{X}$ be a primitive crepant contraction. 
We call $\varphi$ a {\em special} primitive crepant contraction if the following hold:
\begin{enumerate}
\item $\varphi$ corresponds to an extremal primitive relation $x_1+x_2=2x$, 
where $x_1,x_2,x\in\G(\Sigma)$. Namely, 
$\varphi|_E:E\to \varphi(E)$ has a $\mathbb{P}^1$-bundle structure, where 
$E$ is the exceptional divisor of $\varphi$ and $\varphi(E)$ is 
a $(d-2)$-dimensional 
torus invariant submanifold in $\overline{X}$. 
\item $\varphi(E)$ is a smooth toric weak Fano $(d-2)$-fold. 
\item 
Let $\Sigma_{\varphi}\subset \Sigma$ be the $(d-2)$-dimensional subfan 
\[
\left\{\tau\in\Sigma\,\left|\,\left(\tau\cap\G(\Sigma)\right)\cap\{x,x_1,x_2\}=\emptyset,\mbox{ while }
\exists \sigma\in\Sigma\mbox{ s.t. }\tau\prec\sigma\mbox{ and }
x\in\sigma\cap\G(\Sigma)\right.\right\},
\]
where $\tau\prec\sigma$ means that $\tau$ is a face of $\sigma$. 
Then, there exists $m\in M$ such that $m(v)\ge -1$ for any $v\in\G(\Sigma)$, 
$m(x)=m(x_1)=m(x_2)=-1$ 
and $m(y)=0$ 
for any $y\in\G(\Sigma_{\varphi})$. 
\end{enumerate}
\end{defn}
We frequently use the following claim, though this is an easy consequence of 
the definition of $\Sigma_\varphi$:
\begin{claim}
$\langle x\rangle+\tau\in\Sigma$ for any $\tau\in\Sigma_\varphi$. 
\end{claim}
%\begin{rem}
%In Definition \ref{specon}, $Y$ has to be a smooth weak Fano $(d-2)$-fold.  
%\end{rem}

\begin{defn}\label{mizukawa}
Let $X=X_{\Sigma}$ be a smooth toric weak Fano $d$-fold. 
We call $X$ a toric {\em special} weak Fano $d$-fold if $X$ is {\em not} 
a Fano manifold, and 
every primitive crepant contraction $\varphi:X\to\overline{X}$ on $X$ is special. 
\end{defn}

\begin{rem}
The notion of {\em special} primitive crepant contractions is a generalization of 
$3$-dimensional $(0,2)$-type contractions 
studied by Minagawa (see \cite{minagawa} and \cite{sato2}). 
In particular,  a smooth toric weak Fano $3$-fold is special if and only if it is a 
toric weakened Fano $3$-fold. 
\end{rem}

In the rest of this section, we study the structure of 
toric special weak Fano manifolds.

The following proposition is well-known for experts of 
Minimal Model Program. 
We describe a sketch of the proof for the reader's convenience. 

\begin{prop}\label{thanksfujino}
Let $X$ be a quasi-projective canonical toric $d$-fold. 
If there exist two projective {\em crepant} toric morphisms 
$\varphi_1:X_1\to X$ and $\varphi_2:X_2\to X$ 
such that $X_1$ and $X_2$ are $\mathbb{Q}$-{\em factorial} and {\em terminal}, 
then $X_1$ and $X_2$ can be obtained from each 
other by a finite succession of {\em flops}. 
\end{prop}
\begin{proof}[Sketch of the proof]
Since $\varphi_1$ and $\varphi_2$ are crepant, 
$X_1$ and $X_2$ are isomorphic in codimension one. 
So, we can do the same argument in the proof of 
\cite[Proposition 5.7]{fujino-notes} by using relative toric Mori theory 
developed in \cite{fujino-sato}. 
\end{proof}

\begin{lem}\label{anticanonical}
 A smooth toric special weak Fano $d$-fold $X=X_{\Sigma}$ is 
 uniquely determined only by $\G(\Sigma)$. 
\end{lem}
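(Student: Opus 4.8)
The plan is to show that the combinatorial data $\G(\Sigma)$ forces the fan $\Sigma$ itself, by arguing that a maximal cone of $\Sigma$ is completely determined by which primitive collections it must avoid. First I would recall the standard fact that a smooth complete fan is recovered from its set of maximal cones, and that a subset $S\subset\G(\Sigma)$ spans a cone of $\Sigma$ if and only if $S$ contains no primitive collection; hence $\Sigma$ is determined by $\G(\Sigma)$ together with the collection $\mathcal{P}$ of primitive subsets of $\G(\Sigma)$. So the real content is to prove that, for a toric special weak Fano $d$-fold, $\mathcal{P}$ is already determined by the finite set $\G(\Sigma)\subset N$.

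The key step is to analyze how the weak-Fano and specialness hypotheses pin down the possible primitive relations. Given a candidate primitive collection $P=\{x_1,\dots,x_r\}\subset\G(\Sigma)$, its primitive relation $x_1+\cdots+x_r=a_1y_1+\cdots+a_sy_s$ is a relation in $N$ with $y_i\in\sigma(P)\cap\G(\Sigma)$, and I would argue that the location of the point $x_1+\cdots+x_r$ relative to the other elements of $\G(\Sigma)$, combined with the constraint $\deg P\ge 0$ (weak Fano) and the fact that every degree-$0$ primitive collection must be of the special form $x_i+x_j=2x$ with the extra data of Definition \ref{specon}(3), leaves no freedom: once $\G(\Sigma)$ is fixed, the multiset $\{y_1,\dots,y_s\}$ with multiplicities $a_i$ realizing $x_1+\cdots+x_r$ in the positive span of elements of $\G(\Sigma)$ is forced, and whether $P$ is actually a primitive collection (as opposed to spanning a cone) is then determined by $\deg P$ and by compatibility with the other relations. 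I would make this precise by induction on the number of elements of $P$, using Theorem \ref{toricconethm} to see that $\NE(X)$, hence the full list of extremal primitive relations, is recovered, and then using the contractibility/fan-completion statement of Proposition \ref{contractible} to propagate the cone structure.

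Concretely, I would proceed as follows. (1) Observe that it suffices to show two smooth toric special weak Fano fans $\Sigma,\Sigma'$ with $\G(\Sigma)=\G(\Sigma')$ have the same maximal cones. (2) Show the sets of primitive collections agree: if $P$ is primitive in $\Sigma$ but spans a cone in $\Sigma'$, derive a contradiction by comparing the unique expressions of $\sum_{x\in P}x$ in the two fans and invoking the weak Fano inequality together with the rigidity forced by condition (3) of Definition \ref{specon} on degree-$0$ collections; the degree-positive collections are handled because an extremal $K_X$-negative ray on a toric variety with fixed rays has a forced primitive relation. (3) Conclude that the set of cone-spanning subsets, hence $\Sigma=\Sigma'$. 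The main obstacle I expect is step (2) in the case of a \emph{non-extremal} primitive collection, or an extremal one with $s\ge 2$ and $\deg P>0$ (a small or flipping-type contraction), where $\G(\Sigma)$ alone need not obviously determine the primitive relation; here I would need to exploit that $\NE(X)$ is generated by the extremal primitive relations and that a non-extremal primitive relation is a positive combination (with the integrality of Remark \ref{odasense}) of these, together with the classification constraints on the weak Fano cone, to squeeze out uniqueness. The rest is bookkeeping with linear relations in $N$.
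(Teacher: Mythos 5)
There is a genuine gap: your step (2) is the entire content of the lemma, and you never actually prove it. The dangerous configuration is a two-element degree-zero collection with relation $z_1+z_2=w_1+w_2$, where the point $z_1+z_2$ lies in the relative interior of both candidate cones $\langle z_1,z_2\rangle$ and $\langle w_1,w_2\rangle$. This is exactly the toric flop ambiguity: two smooth projective fans with the same ray set $\G(\Sigma)$, both crepant over the anticanonical model, can a priori disagree on which of the two pairs spans a cone, and both choices are compatible with the weak Fano inequality $\deg P\ge 0$. So your central assertion that ``once $\G(\Sigma)$ is fixed, the multiset $\{y_1,\dots,y_s\}$ realizing $x_1+\cdots+x_r$ is forced'' begs the question, and the statement that whether $P$ is primitive ``is then determined by $\deg P$'' is circular, since $\deg P$ is defined through the primitive relation in a given fan. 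Your closing remark that you ``would need to exploit'' the generation of ${\rm NE}(X)$ and the integrality of Remark \ref{odasense} to ``squeeze out uniqueness'' names the right tools but does not carry out the argument; moreover you locate the main difficulty in $K_X$-negative small contractions and non-extremal collections of positive degree, whereas the obstruction to uniqueness is precisely the $K_X$-trivial (degree-zero) small ambiguity just described. The specialness hypothesis does eliminate it --- essentially by the argument later used in Proposition \ref{keyprop}: a non-extremal degree-zero class decomposes into extremal degree-zero classes, which by Definition \ref{specon} are divisorial of type $x_1+x_2=2x$, and the linear functionals $m$ of Definition \ref{specon}(3) then force an element of $\G(\Sigma)$ into the relative interior of $\langle z_1,z_2\rangle$, which resolves the ambiguity --- but this is exactly the argument your proposal is missing.

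For comparison, the paper's proof avoids all of this bookkeeping by a short geometric argument: since $X$ is weak Fano, the anticanonical model $\Phi_{|-K_X|}(X)$ is a Gorenstein toric Fano variety whose fan is the face fan over the convex hull of $\G(\Sigma)$, hence is determined by $\G(\Sigma)$ alone; $X$ is a crepant resolution of it with ray set $\G(\Sigma)$, any two such resolutions differ by small crepant modifications, and specialness (every primitive crepant contraction is divisorial of type $x_1+x_2=2x$) excludes these, so the resolution, and hence the fan $\Sigma$, is unique. If you wish to keep a purely combinatorial route, you must prove the flop-exclusion statement above; otherwise the anticanonical-model argument is both shorter and cleaner.
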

\begin{proof}
The anticanonical model $\Phi_{\left|-K_X\right|}(X)$ 
is a Gorenstein toric Fano variety and it is uniquely determined 
by $\G(\Sigma)$. Every primitive crepant contraction of $X$ is 
special, that is, $X$ has no flopping contraction. Therefore, 
Proposition \ref{thanksfujino} tells us that 
the crepant resolution of  $\Phi_{|-K_X|}(X)$ is also unique. 
\end{proof}

\begin{rem}
Lemma \ref{anticanonical} does not hold for a general 
smooth toric weak Fano variety even in dimension $3$, 
because the Ordinary Double Point has two different crepant resolutions.
\end{rem}

\begin{prop}\label{keyprop}
Let $X=X_\Sigma$ be a smooth toric special weak Fano $d$-fold. 
Then, for any primitive collection $P=\{z_1,z_2\}\subset\G(\Sigma)$ such that 
$z_1+z_2\neq 0$, there exists $z\in\G(\Sigma)$ such that 
$z$ is contained in the relative interior of $\langle z_1,z_2\rangle$. 
\end{prop}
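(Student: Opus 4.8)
The plan is to use the primitive relation of $P=\{z_1,z_2\}$ together with the weak Fano hypothesis to pin down the shape of $\sigma(P)$, and then invoke the classification of primitive crepant contractions to locate the promised lattice point $z$. Since $X$ is weak Fano, every primitive collection has nonnegative degree, so for $P$ we have $\deg P = 2-(a_1+\cdots+a_s)\ge 0$, where $z_1+z_2=a_1y_1+\cdots+a_sy_s$ is the primitive relation with all $a_j\in\mathbb{Z}_{>0}$ and $\{y_1,\ldots,y_s\}=\sigma(P)\cap\G(\Sigma)$. Because $z_1+z_2\neq 0$, the cone $\sigma(P)$ is not the zero cone, so $s\ge 1$ and hence $a_1+\cdots+a_s\le 2$. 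This forces exactly two possibilities: either $s=1$ with $a_1\in\{1,2\}$, or $s=2$ with $a_1=a_2=1$.

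I would first dispose of the case $s=2$, i.e.\ $z_1+z_2=y_1+y_2$. Here $\deg P=0$, so by item (5) in the list following Theorem~\ref{toricconethm} the extremal contraction attached to (a face containing) $r(P)$ is primitive crepant; but the corresponding primitive relation has $s=2\ge 2$, so by item (3) it would be a \emph{small} contraction. Since $X$ is a toric special weak Fano manifold, Definition~\ref{mizukawa} says every primitive crepant contraction on $X$ is special, and by Definition~\ref{specon}(1) a special primitive crepant contraction corresponds to a relation of the form $x_1+x_2=2x$, in particular with $s=1$. To make this rigorous I must be careful that $r(P)$ itself need not span an extremal ray, so what I actually argue is: $r(P)$ lies in $\NE(X)$, which by Theorem~\ref{toricconethm} is generated by extremal primitive relations; running the relevant cone argument (or simply passing to an extremal primitive collection $Q$ with $r(Q)$ on the boundary ray through $r(P)$ and with $\deg Q=0$) produces a non-special primitive crepant contraction unless the geometry collapses — contradiction. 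This rules out $s=2$; alternatively, and perhaps more cleanly, I would argue directly that a length-two primitive collection with $s=2$ gives a small contraction on $X$ whose exceptional locus is not divisorial, contradicting that the unique crepant resolution of the anticanonical model (Lemma~\ref{anticanonical}) factors through special, hence divisorial, contractions only.

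It remains to treat $s=1$: $z_1+z_2=a_1 y_1$ with $a_1\in\{1,2\}$ and $y_1\in\G(\Sigma)$. If $a_1=2$ then $z_1+z_2=2y_1$, and $y_1$ lies in the relative interior of $\langle z_1,z_2\rangle$ (it is the midpoint), so $z:=y_1$ works. If $a_1=1$ then $z_1+z_2=y_1$, so $y_1$ is again an interior point of the two-dimensional cone $\langle z_1,z_2\rangle$ (it is a positive combination of $z_1$ and $z_2$ with both coefficients equal to $1>0$), and we take $z:=y_1$; the only thing to check is that $\langle z_1,z_2\rangle$ is genuinely $2$-dimensional, i.e.\ $z_1,z_2$ are not proportional, which holds because $z_1+z_2\neq 0$ and a primitive collection consists of distinct primitive generators, so $z_1\neq z_2$, and if they were positively proportional the relation could not be $z_1+z_2=y_1$ with $y_1$ primitive and $z_1,z_2$ primitive — a short numerical check. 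The main obstacle is the $s=2$ step: I expect to spend most of the effort making precise why the potentially non-extremal $1$-cycle $r(P)$ still yields a genuine special primitive crepant contraction on $X$, so that Definitions~\ref{specon} and~\ref{mizukawa} can be brought to bear; once that is settled the remaining cases are immediate.
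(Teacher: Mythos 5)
Your reduction of the primitive relation of $P$ to the three shapes $z_1+z_2=y_1$, $z_1+z_2=2y_1$, $z_1+z_2=w_1+w_2$ is correct, and your treatment of the first two agrees with the quick opening step of the paper's proof. The genuine gap is in the case $z_1+z_2=w_1+w_2$: you try to show this case can never occur on a toric special weak Fano manifold, but that is simply false, so no amount of tightening the "cone argument" will produce the contradiction you are hoping for. For a concrete counterexample, take $X=W^2_5$ from Table 1 (or, in dimension $4$, its product with any smooth toric Fano or special weak Fano surface): both crepant relations $w_2+w_7=2w_1$ and $w_1+w_6=2w_7$ are special (take $m=(-1,0)$), yet $\{w_2,w_6\}$ is a primitive collection with relation $w_2+w_6=w_1+w_7$, i.e.\ exactly the case you claim is impossible. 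What saves the proposition there is that $w_2,w_1,w_7,w_6$ are collinear, so $w_1$ and $w_7$ lie in the relative interior of $\langle w_2,w_6\rangle$ and the conclusion holds anyway. Your fallback arguments also do not work: since $r(P)$ need not be extremal, there is no contraction morphism associated to it, so neither "a small contraction on $X$" nor "a small contraction contradicting the uniqueness of the crepant resolution" exists; and one cannot "pass to an extremal primitive collection $Q$ with $r(Q)$ on the ray through $r(P)$", because $r(P)$ in general lies in the interior of a cone spanned by several degree-zero extremal classes, not on a single extremal ray.

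What is actually needed in this case — and is the bulk of the paper's proof — is the following. Since an extremal degree-zero collection with $s=2$ would give a non-special crepant contraction, $P$ is not extremal, so $r(P)=a_1r(P_1)+\cdots+a_nr(P_n)$ with $n\ge 2$ and all $P_i$ extremal; weak Fano plus $\deg P=0$ forces $\deg P_i=0$, hence each $P_i$ has the special form $a+b=2c$, and from the decomposition one extracts two special relations $x_1+x_2=2x$ and $y_1+y_2=2y$ with $x=w_1$, $y=w_2$. One then runs a case analysis on the possible coincidences among $x_1,x_2,x,y_1,y_2,y$, using Proposition \ref{contractible} to produce cones of $\Sigma$ and the linear functionals $m\in M$ from Definition \ref{specon}(3) to evaluate on generators: three of the four configurations are contradictory, and the remaining one (where $x=y_1$, $x_2=y$) forces $z_1,z_2,x_1,x,y,y_2$ to be collinear, whence $x$ and $y$ lie in the relative interior of $\langle z_1,z_2\rangle$, which is the desired conclusion. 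None of this structural analysis appears in your sketch, and since its outcome is "the conclusion holds in this case" rather than "this case is vacuous", the proposal as written does not prove the proposition.
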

\begin{proof} 
If $\deg P=2$ then $\dim\sigma(P)=0$, while if $\deg P=1$ then $\dim\sigma(P)=1$. 
Thus, we may assume that 
$\deg P=0$ and $\dim\sigma(P)=2$, that is, 
$\sigma(P)=\langle x,y\rangle$ for $x,y\in\G(\Sigma)$ 
and the primitive relation for $P$ is $z_1+z_2=x+y$. 

Since every primitive crepant contraction is special, $P$ is not extremal. Therefore, 
there exist extremal primitive collections $P_1,\ldots, P_n\subset\G(\Sigma)$ such that 
\[
r(P)=a_1r(P_1)+\cdots+a_nr(P_n),
\]
where $n\ge 2$ and $a_1,\ldots,a_n>0$. Since $\deg P=0$, 
we have $\deg P_1=\cdots=\deg P_n=0$. This means that for any $1\le i\le n$, 
the extremal contraction associated to $P_i$ must be a special primitive crepant contraction. 
In particular, there exist two extremal primitive relations
\[
x_1+x_2=2x\mbox{ and }y_1+y_2=2y,
\]
where $\{x_1,x_2,y_1,y_2\}\subset\G(\Sigma)$. 
We remark that $\langle x,y\rangle
=\sigma(P)\in\Sigma$. 
Since some of elements $x,y,x_1,x_2,y_1,y_2$ may coincide, 
we have to consider the following four cases:
\begin{enumerate}
\item $x_1=y_1$.
\item $x=y_1$.
\item $x=y_1$ and $y=x_2$. 
\item $x,y,x_1,x_2,y_1,y_2$ are distinct elements. 
\end{enumerate}

\medskip

(1) The case where $x_1=y_1$. Since $x_1+x_2=2x$ and $y_1+y_2=2y$ are extremal, 
Proposition \ref{contractible} tells us that 
$\langle x,y_2\rangle, \langle x_2,y\rangle\in\Sigma$. 
On the other hand, 
the relations $x_1+x_2=2x$, $y_1+y_2=2y$ and $x_1=y_1$ imply that 
\[
x_1+x_2+2y=2x+y_1+y_2\ \Longleftrightarrow\ x_2+2y=2x+y_2.
\]
The last relation means that either $\{ x_2,y\}$ or $\{ x,y_2\}$ 
does not generate a cone in $\Sigma$. This is a contradiction. 

\medskip

(2) The case where $x=y_1$. %%%%%%%%%%%%%%%%%
By Proposition \ref{contractible}, $\langle y,x_1\rangle$ and $\langle y,x_2\rangle$ 
are in $\Sigma$ as above. 
Since $y_1+y_2=2y$ corresponds to a special primitive crepant contraction, 
there exists $m\in M$ as in Definition \ref{specon}. 
For this $m$, we have 
\[
m(y_1)=m(y_2)=m(y)=-1\mbox{ and }m(x_1)=m(x_2)=0.
\] 
On the other hand, 
\[
m(x_2)=m(2x-x_1)=2m(x)-m(x_1)=2m(y_1)-m(x_1)=-2<0, 
\]
a contradiction. 

\medskip

(3) The case where $x=y_1$ and $y=x_2$. %%%%%%%%%%%%%%
Let $m\in M$ be the element as in Definition \ref{specon} for 
the extremal primitive relation $x_1+y=2x$. 
\[
m(z_1)+m(z_2)=m(x)+m(y)=-2
\]
implies  
\[
m(z_1)=m(z_2)=-1.
\] 
If $z_1$ and $z_2$ are not contained in 
the line passing through $x_1,x,y,y_2$, then 
there must exist $v\in\G(\Sigma)\setminus\{x_1,y,x\}$ such that 
$m(v)=-1$ and $\langle x,v\rangle\in\Sigma$, 
since the triangle whose vertices are $x_1$, $y$ and $z_1$ 
is contained in the hyperplane 
\[
\left\{u\in N_{\mathbb{R}}\,\left|\,m(u)=-1\right.\right\}\subset N_{\mathbb{R}}
\]
and $-1$ is the minimum value of $m$ on $\G(\Sigma)$.
However, this is impossible, because 
$\langle x,v\rangle\in\Sigma$ implies $m(v)=0$ by the definition of 
a special primitive crepant contraction. 
Therefore, $z_1,z_2,x_1,x,y,y_2$ are contained in a line. 
Since $z_1+z_2=x+y$ and $\langle x,y\rangle\in\Sigma$, 
in this case, $x$ and $y$ are contained in the relative 
interior of $\langle z_1,z_2\rangle$. 

\medskip

(4) The case where  $x_1,x_2,x,y_1,y_2,y$ are distinct elements. 
By applying Proposition \ref{contractible} to the extremal primitive relations 
$x_1+x_2=2x$ and $y_1+y_2=2y$ several times, we can easily see that 
\[
\langle x,y,x_1,y_1\rangle,\ \langle x,y,x_1,y_2\rangle,\ 
\langle x,y,x_2,y_1\rangle,\ \langle x,y,x_2,y_2\rangle\in\Sigma. 
\]
Let $m_1,m_2\in M$ be the elements associated to 
the extremal primitive relations $x_1+x_2=2x$ and $y_1+y_2=2y$ 
as in Definition \ref{specon}, respectively. 
We remark that 
\[
m_1(x_1)=m_1(x_2)=m_1(x)=m_2(y_1)=m_2(y_2)=m_2(y)=-1,
\]
while 
\[
m_1(y_1)=m_1(y_2)=m_1(y)=m_2(x_1)=m_2(x_2)=m_2(x)=0.
\]
Then, 
\[
m_1(z_1+z_2)=m_1(x+y)\ \Longleftrightarrow\ 
m_1(z_1)+m_1(z_2)=m_1(x)+m_1(y)=-1+0=-1
\]
says that either $m_1(z_1)$ or $m_1(z_2)$ is equal to $-1$. 
Without loss of generality, we may assume $m_1(z_1)=-1$ and $m_1(z_2)=0$. 
As in the case of (3), $z_1$ has to be contained in the line $L_1$ 
passing through $x_1,x_2,x$. 
Similarly, we obtain $m_2(z_1)+m_2(z_2)=m_2(x)+m_2(y)=0-1=-1$. 
Thus, $m_2(z_1)=-1$ or $m_2(z_2)=-1$. 
If $m_2(z_1)=-1$, then $z_1$ has to be contained in the line $L_2$ 
passing through $y_1,y_2,y$ as above. 
Since $L_1\cap L_2=\emptyset$, this is impossible. 
Thus, we have $m_2(z_2)=-1$ and 
$z_2,y_1,y_2,y$ are contained in a line. 

Consequently, $z_1$ and $z_2$ are expressed as follows: 
\[
z_1=sx_1+(1-s)x\ (s\neq 0),\mbox{ while }z_2=ty_1+(1-t)y\ (t\neq 0).
\]
%The determinant of the $4\times 4$ matrix $(x,y,z_1,z_2)$ is calculated as 
%\[
%\left|x\ y\ z_1\ z_2\right|=\left|x\ y\ sx_1+(1-s)x\ ty_1+(1-t)y\right|
%\]
%\[
%=|x\ y\ sx_1\ ty_1|+|x\ y\ sx_1\ (1-t)y|+|x\ y\ (1-s)x\ ty_1|+|x\ y\ (1-s)x\ (1-t)y|
%\]
%\[
%=st\left|x\ y\ x_1\ y_1\right|\neq 0.
%\]
Since $x,y,x_1,y_1$ are linearly independent, $x,y,z_1,z_2$ are also linearly independent. 
However, this is a contradiction, because there is a linear relation $z_1+z_2=x+y$. 
\end{proof}

Let $X=X_\Sigma$ be a smooth toric special weak Fano $d$-fold. 
Since $X$ is not Fano, there exists at least one special primitive crepant contraction 
$\varphi:X\to\overline{X}$. 
So, let $x_1+x_2=2x$ ($x_1,x_2,x\in\G(\Sigma)$) be the 
extremal primitive relation associated to $\varphi$. 
We use the same notation $E\subset X$, 
$\varphi(E)$ and $\Sigma_{\varphi}\subset\Sigma$ 
as in Definition \ref{specon}. 
%By Definition \ref{specon}, without loss of generality, 
%we can put 
%\[
%x=
%\]

Put 
\[
\mathcal{S}_1:=
\left(\mathbb{R}_{\ge 0}x+\mathbb{R}_{\ge 0}x_1\right)\cup
\left(\mathbb{R}_{\ge 0}x+\mathbb{R}_{\ge 0}x_2\right)\cup
\left(\mathbb{R}_{\ge 0}(-x)+\mathbb{R}_{\ge 0}x_1\right)\cup
\left(\mathbb{R}_{\ge 0}(-x)+\mathbb{R}_{\ge 0}x_2\right),%\subset N_{\mathbb{R}}
\]
while put 
\[
\mathcal{S}_2:=
\left(\bigcup_{\sigma\in\Sigma_{\varphi}}\left(\mathbb{R}_{\ge 0}x+\sigma\right)\right)\cup
\left(\bigcup_{\sigma\in\Sigma_{\varphi}}\left(\mathbb{R}_{\ge 0}(-x)+\sigma\right)\right).
\]
\begin{rem}
By Proposition \ref{contractible}, 
we have 
$\mathbb{R}_{\ge 0}x+\mathbb{R}_{\ge 0}x_1,
\mathbb{R}_{\ge 0}x+\mathbb{R}_{\ge 0}x_2\in\Sigma$, 
while $\mathbb{R}_{\ge 0}x+\sigma\in\Sigma$ for any $\sigma\in\Sigma_{\varphi}$, 
where $\mathbb{R}_{\ge 0}x+\sigma$ is a $(\dim\sigma+1)$-dimensional 
cone, because it is simplicial. 
We remark that $\mathcal{S}_1$ is a subspace in $N_\mathbb{R}$, 
but $\mathcal{S}_2$ is not necessarily a subspace. 
Moreover, $\mathcal{S}_1\cap\mathcal{S}_2=\mathbb{R}x$. 
\end{rem}
We can show the following: 
\begin{prop}\label{3nenagumi} %%%%%%%%%%%%%%%%%%%%%%%%%%
Put 
\[
I:=\G(\Sigma)\setminus\left(\{x,x_1,x_2\}\cup\G(\Sigma_{\varphi})\right).
\]
Then, either $I\subset\mathcal{S}_1$ or $I\subset\mathcal{S}_2$ holds. 
\end{prop}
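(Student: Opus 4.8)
\emph{Overall plan.} The idea is to turn ``lies in $\mathcal{S}_1$ (resp. $\mathcal{S}_2$)'' into a linear condition on the class modulo $\mathbb{R}x$, and then to squeeze the elements of $I$ with Proposition~\ref{keyprop}.

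\emph{Step 1: the local picture at $x$.} First I would check that the generators $v$ with $\langle x,v\rangle\in\Sigma$ are exactly $x_1$, $x_2$ and the elements of $\G(\Sigma_\varphi)$: if $\langle x,v\rangle\in\Sigma$ and $v\notin\{x,x_1,x_2\}$ then $\langle v\rangle$ is a face, avoiding $\{x,x_1,x_2\}$, of a cone containing $x$, hence $\langle v\rangle\in\Sigma_\varphi$. Combining this with Proposition~\ref{contractible} applied to $x_1+x_2=2x$ one sees that every maximal cone of $\Sigma$ through $x$ has the form $\langle x,x_i\rangle+\sigma$ with $i\in\{1,2\}$ and $\sigma$ maximal in $\Sigma_\varphi$; since $\varphi(E)$ is complete this forces $\Sigma_\varphi$ to be complete in $V:=\operatorname{span}_{\mathbb{R}}\G(\Sigma_\varphi)$, so that $\mathcal{S}_1=\mathbb{R}x+\mathbb{R}x_1$ and $\mathcal{S}_2=\mathbb{R}x+V$ are linear subspaces with $\mathcal{S}_1\cap\mathcal{S}_2=\mathbb{R}x$. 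Using unimodularity of a maximal cone $\langle x,x_1\rangle+\sigma$ I would also produce $n\in M$ with $n(x)=0$, $n(x_1)=1$, $n(x_2)=-1$ and $n\equiv 0$ on $\G(\Sigma_\varphi)$, so that $\mathcal{S}_2=\ker n$. Writing $\pi\colon N_{\mathbb{R}}\to N_{\mathbb{R}}/\mathbb{R}x$ for the projection, the subspaces $\pi(\mathcal{S}_1)$ and $\pi(\mathcal{S}_2)$ are complementary, and for $v\in\G(\Sigma)$ one has $v\in\mathcal{S}_1\cup\mathcal{S}_2\Longleftrightarrow\pi(v)\in\pi(\mathcal{S}_1)\cup\pi(\mathcal{S}_2)$, while $v\in\mathcal{S}_2\Longleftrightarrow n(v)=0$.

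\emph{Step 2: each $v\in I$ lies in $\mathcal{S}_1\cup\mathcal{S}_2$.} Fix $v\in I$; since $-x\in\mathcal{S}_1\cap\mathcal{S}_2$ we may assume $v\neq-x$. By Step~1, $\langle x,v\rangle\notin\Sigma$, so $\{x,v\}$ is a primitive collection with $x+v\neq 0$, and Proposition~\ref{keyprop} gives $z\in\G(\Sigma)$ in the relative interior of $\langle x,v\rangle$; then $\langle x,z\rangle$ is a proper subcone of $\langle x,v\rangle$ and $\pi(z)$ is a positive multiple of $\pi(v)$. Replacing $v$ by $z$ and iterating produces a strictly decreasing chain of $2$-cones $\langle x,v\rangle\supset\langle x,z\rangle\supset\cdots$ in which every term contains a generator in its interior with $\pi$-image a positive multiple of $\pi(v)$; since $\G(\Sigma)$ is finite the chain terminates, and by Proposition~\ref{keyprop} it can only terminate at a generator $z^{*}$ with $\langle x,z^{*}\rangle\in\Sigma$, hence $z^{*}\in\{x_1,x_2\}\cup\G(\Sigma_\varphi)$ by Step~1. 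As $\pi(v)$ is a positive multiple of $\pi(z^{*})\in\pi(\mathcal{S}_1)\cup\pi(\mathcal{S}_2)$, Step~1 gives $v\in\mathcal{S}_1\cup\mathcal{S}_2$. Consequently: if $n$ vanishes on $I$ then $I\subset\ker n=\mathcal{S}_2$ and we are done, so assume some $v_0\in I$ has $n(v_0)\neq 0$; then $v_0\notin\mathcal{S}_2$, so $v_0\in\mathcal{S}_1\setminus\mathbb{R}x$, and it remains to prove $I\subset\mathcal{S}_1$.

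\emph{Step 3: no ``mixed'' element, the crux.} Suppose some $v\in I$ is not in $\mathcal{S}_1$; by Step~2, $v\in\mathcal{S}_2\setminus\mathbb{R}x$. The key mechanism is: whenever $p,q\in\G(\Sigma)$ satisfy $\pi(p)\in\pi(\mathcal{S}_1)\setminus\{0\}$, $\pi(q)\in\pi(\mathcal{S}_2)\setminus\{0\}$ and $\{p,q\}$ is a primitive collection, Proposition~\ref{keyprop} produces a generator $z=\gamma p+\delta q$ with $\gamma,\delta>0$, whose $\pi$-image has nonzero components along both $\pi(\mathcal{S}_1)$ and $\pi(\mathcal{S}_2)$, so $z\notin\mathcal{S}_1\cup\mathcal{S}_2$, contradicting Step~2; hence any such pair $\{p,q\}$ must instead span a cone of $\Sigma$. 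Applying this to $\{x_1,v\}$, $\{x_2,v\}$ and $\{v_0,v\}$ forces $\langle x_1,v\rangle,\langle x_2,v\rangle,\langle v_0,v\rangle\in\Sigma$, so $v$ is simultaneously adjacent to $x_1$, $x_2$ and $v_0$, although $\langle x_1,x_2\rangle\notin\Sigma$ and $\langle x,v\rangle\notin\Sigma$. I would then play off these forced adjacencies against the relation $x_1+x_2=2x$, the functionals $m$ and $n$, and the special‑contraction functionals attached to the extremal (hence special) primitive collections into which any degree‑$0$ relation that appears decomposes, running essentially the same case‑by‑coincidence argument as in the proof of Proposition~\ref{keyprop} — now with the extra generator $v_0$ present — to reach a contradiction. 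This gives $I\subset\mathcal{S}_1$ and completes the proof.

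\emph{Where the difficulty is.} Steps~1 and~2 are essentially formal once the statements are isolated, and the ``in‑between generator'' observation in Step~3 is clean and disposes of most pairs at once. The real work is the remainder of Step~3: turning the surviving adjacency pattern into a contradiction needs the same careful bookkeeping as in the proof of Proposition~\ref{keyprop} — keeping track of which of $x,x_1,x_2,v_0,v$ and the auxiliary middles and endpoints coincide, and checking each time that the relation to which Proposition~\ref{contractible} is applied is genuinely extremal.
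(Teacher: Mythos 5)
Your Step 2 (every element of $I$ lies in $\mathcal{S}_1\cup\mathcal{S}_2$, by iterating Proposition \ref{keyprop} along strictly shrinking cones $\langle x,\cdot\rangle$) coincides with the first half of the paper's proof and is fine. The problems are in Steps 1 and 3. In Step 1 you claim that completeness of $\varphi(E)$ forces $\Sigma_\varphi$ to be complete inside $V=\operatorname{span}_{\mathbb{R}}\G(\Sigma_\varphi)$, so that $\mathcal{S}_2=\mathbb{R}x+V$ is a linear subspace, cut out by a functional $n$, with $\pi(\mathcal{S}_1)$ and $\pi(\mathcal{S}_2)$ complementary. This is false: $\varphi(E)$ is complete as a fan in the quotient $N_{\mathbb{R}}/\mathcal{S}_1$, not inside $\operatorname{span}_{\mathbb{R}}\G(\Sigma_\varphi)$, and the paper explicitly remarks that $\mathcal{S}_2$ need not be a subspace. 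For instance for $\mathcal{Z}_1$ one has $\G(\Sigma_\varphi)=\{(0,1,1,0),(0,0,0,1),(0,0,-1,-1)\}$, which spans a $3$-dimensional space; $\{x\}\cup\G(\Sigma_\varphi)$ spans all of $N_{\mathbb{R}}$, so no functional $n$ as you describe exists, and $\mathcal{S}_2$ is a nonconvex union of cones. This wrecks the reduction at the end of your Step 2 ($I\subset\ker n=\mathcal{S}_2$) and, more seriously, the justification of your ``key mechanism'' in Step 3: saying that $\pi(z)$ has ``nonzero components along both $\pi(\mathcal{S}_1)$ and $\pi(\mathcal{S}_2)$'' is meaningless when $\pi(\mathcal{S}_2)$ is not a subspace and its span may well contain $\pi(x_1)$. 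The conclusion $z\notin\mathcal{S}_2$ can be rescued, but only by a fan argument (e.g.\ $\pi(v)\in\pi(\sigma)$ for some $\sigma\in\Sigma_\varphi$, the cone $\langle\pi(x_1)\rangle+\pi(\sigma)$ lies in the complete fan of the divisor $E$ in $N_{\mathbb{R}}/\mathbb{R}x$, and $\pi(z)\in\pi(\sigma')$ would force $\pi(x_1)\in\pi(\sigma')$, i.e.\ $x_1\in\mathcal{S}_1\cap\mathcal{S}_2=\mathbb{R}x$), not by a direct-sum decomposition.

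Even granting the mechanism, Step 3 stops exactly where the real work begins: from the forced adjacencies $\langle x_1,v\rangle,\langle x_2,v\rangle,\langle v_0,v\rangle\in\Sigma$ you only announce that a contradiction will follow ``by the same case-by-coincidence argument''; nothing is carried out, and it is not clear it can be, since these adjacencies are not by themselves contradictory and Proposition \ref{contractible} cannot be applied with $\tau=\langle v\rangle$ because $\langle x,v\rangle\notin\Sigma$. The paper closes the argument quite differently and much more directly: it takes the last $I$-members $z_1'\in\mathcal{S}_1\cap I$ and $z_2'\in\mathcal{S}_2\cap I$ of the Step-2 chains, so that $a_1x+b_1z_1'=x_1$ and $a_2x+b_2z_2'=y$ with $y\in\G(\Sigma_\varphi)$ and $a_i,b_i>0$, and observes that the identity $\frac{b_1}{a_1}z_1'+\frac{1}{a_2}y=\frac{1}{a_1}x_1+\frac{b_2}{a_2}z_2'$ forces one of the mixed pairs $\{z_1',y\}$, $\{x_1,z_2'\}$ to be a primitive collection; Proposition \ref{keyprop} applied to that pair then produces a generator lying in $I$ but outside $\mathcal{S}_1\cup\mathcal{S}_2$, contradicting the first half. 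Your ``key mechanism'' is precisely this final blow, but you use it only to exclude mixed primitive collections, whereas the missing ingredient is an argument that actually produces one (or some substitute contradiction); without it the proposal does not prove the proposition.
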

\begin{proof}
First, we show that $I\subset\mathcal{S}_1\cup\mathcal{S}_2$. 
Remark that for any $z\in I$, $\{x,z\}$ is a primitive collection. 
If $x+z=0$, then $z=-x\in\mathcal{S}_1\cap\mathcal{S}_2$. 
So suppose that $x+z\neq 0$. Then, 
Proposition \ref{keyprop} says that there exists 
$z'\in\G(\Sigma)$ such that $z'$ is contained 
in the relative interior of $\langle x,z\rangle$. 
If $z'=x_1$, then $ax+bz=x_1$ for $a,b>0$. 
So, we have $z=\frac{1}{b}(-ax+x_1)\in\mathcal{S}_1$. The case where 
$z'=x_2$ is completely similar. If $z'=y$ for some $y\in\G(\Sigma_{\varphi})$, then 
we can show that $z\in\mathcal{S}_2$ similarly. 
So, we may assume $z'\in I$. In this case, we can replace $z$ by $z'$ and 
do the same arguments repeatedly. This operation has to finish in finite steps. 
Therefore, we obtain a finite subset 
$\{z_0:=z,z_1,\ldots,z_s\}\subset \G(\Sigma)$ such that 
$z_0,\ldots,z_{s-1}\in I$, while $z_s\in \{x_1,x_2\}\cup\G(\Sigma_{\varphi})$. 
In any case, $\{z_0,z_1,\ldots,z_s\}$ is contained in either $\mathcal{S}_1$ or 
$\mathcal{S}_2$. 

%\medskip

Next, we show that $I\subset\mathcal{S}_1$ or $I\subset\mathcal{S}_2$. 
Suppose that there exist $z_1\in\mathcal{S}_1\cap I$ and 
$z_2\in\mathcal{S}_2\cap I$ such that $z_1\neq -x$ and $z_2\neq -x$. 
As mentioned above, without loss of generality, we may assume that 
%there exist 
%$z_1'\in\mathcal{S}_1\cap I$ and 
%$z_2'\in\mathcal{S}_2\cap I$ such that 
$x_1$ is contained in the relative interior of $\langle x,z_1\rangle$, while 
for some $y\in\G(\Sigma_{\varphi})$, $y$ is contained in the relative interior of 
$\langle x,z_2\rangle$. Thus, we obtain 
\[
a_1x+b_1z_1=x_1\mbox{ and }a_2x+b_2z_2=y\mbox{ for some }a_1,a_2,b_1,b_2>0.
\]
Then, 
\[
\frac{b_1}{a_1}z_1+\frac{1}{a_2}y=\frac{1}{a_1}x_1+\frac{b_2}{a_2}z_2
\]
means that either $\{z_1,y\}$ or $\{x_1,z_2\}$ is a primitive collection 
because the intersection of the relative interiors of 
$\langle z_1,y\rangle$ and $\langle x_1,z_2\rangle$ 
is one-dimensional. 
Suppose that $\{z_1,y\}$ is a primitive collection, that is, 
the relative interior of $\{z_1,y\}$ contains $w\in\G(\Sigma)$ by Proposition \ref{keyprop}. 
We remark that 
$z_1\in\mathbb{R}x+\mathbb{R}_{\ge 0}x_1\subset\mathcal{S}_1$, 
while there exists a $(d-2)$-dimensional cone $\sigma\in\Sigma_\varphi$ 
such that $y\in \mathbb{R}x+\sigma\in\mathcal{S}_2$. 
This means 
%Since $z_1$, $y$ and $w$ are linearly dependent, 
%we have a contradiction, that is, $w\in I$ such that 
$w\not\in\mathcal{S}_1\cup\mathcal{S}_2$, a contradiction. 
The other case is similar. 
%Obviously, $w\not\in\mathcal{S}_1$. If $w\in\mathcal{S}_1$ 
%In any case, we have a contradiction, because there exists $w\in I$ such that 
%$w\not\in\mathcal{S}_1\cup\mathcal{S}_2$ by Proposition \ref{keyprop}. 
\end{proof}

For the case $I\subset \mathcal{S}_1$, more precisely, we obtain the following.
\begin{prop}\label{ichiomain}
%Let $X=X_\Sigma$ be a smooth toric special weak Fano $d$-fold with a 
%special primitive crepant contraction $\varphi$ such that 
%the exceptional divisor is $E$ and the associated extremal 
%primitive relation is $x_1+x_2=2x$, where $x_1,x_2,x\in\G(\Sigma)$. 
If $I\subset \mathcal{S}_1$, then
%Then, one of the following holds:
%\begin{enumerate}
%\item 
$X$ is isomorphic to a smooth 
toric weak del Pezzo surface bundle over $\varphi(E)$. 
%\item 
%For any primitive collection $P\neq\{x_1,x_2\}$ of $\Sigma$, 
%we have $P\cap\{x_1,x_2\}=\emptyset$. 
%a toric weak Fano $(d-1)$-fold bundle over $\mathbb{P}^1$ 
%such that a fiber has a morphism to $\mathbb{P}^1$ whose 
%general fiber is isomorphic to $Y$. 
%\end{enumerate}
\end{prop}
\begin{proof}
%We use the same notation as in Proposition \ref{3nenagumi}. 
%
%First we consider the case where $I\subset\mathcal{S}_1$ in Proposition \ref{3nenagumi}. 
First, we remark that $\G(\Sigma)\cap\mathcal{S}_1=\{x,x_1,x_2\}\cup I$. 
For any $y\in\G(\Sigma_\varphi)$, we have $m(y)=0$ by the definition, 
so there must exist an element $z\in\G(\Sigma)\cap\mathcal{S}_1$ such that 
$m(z)>0$. More precisely, since $X$ is a smooth weak Fano variety, an easy observation 
tells us that $\G(\Sigma)\cap\mathcal{S}_1$ 
must contain $z_0:=-x$, $z_1:=-x_2$ or $z_2:=-x_1$. 

Suppose $z_0\in\G(\Sigma)\cap\mathcal{S}_1$. 
Proposition \ref{keyprop} says that $\{z_0,x_1\}$ generates a 
$2$-dimensional cone in $\Sigma$ or 
there exists an element in $\G(\Sigma)$ 
in the relative interior of $\langle z_0,x_1\rangle$. 
$\{z_0,x_2\}$ is similar.  By repeating this argument, 
we obtain the $2$-dimensional nonsingular complete fan $\Sigma'$ 
in the subspace $\mathcal{S}_1\subset N_\mathbb{R}$ such that 
$\G(\Sigma')=\{x,x_1,x_2\}\cup I$. The toric surface $S$ associated to $\Sigma'$ 
is a weak del Pezzo surface. Then, one can construct a fan $\Sigma^+$ 
such that $\G(\Sigma^+)=\G(\Sigma)$ and the associated toric variety is 
an $S$-bundle over $\varphi(E)$ (see Lemma \ref{pcbdl}). 
Then, Lemma \ref{anticanonical} tells us that $X^+\cong X$. 

So, suppose $z_0\not\in\G(\Sigma)\cap\mathcal{S}_1$. 
In this case, we may assume that $z_1\in\G(\Sigma)\cap\mathcal{S}_1$ 
without loss of generality. $z_3:=\frac{1}{2}(x_1+z_1)$ has to be 
contained in $\G(\Sigma)\cap\mathcal{S}_1$. 
Since $\G(\Sigma)\setminus\mathcal{S}_1=\G(\Sigma_\varphi)$, 
$x_1+z_1=2z_3$ has to be an extremal primitive relation such that 
the associated primitive crepant contraction $\varphi'$ is special. 
$\varphi'$ contracts its exceptional divisor to $\varphi(E)$. Therefore, 
there exists $m'\in M$ such that $m'(x_1)=m'(z_1)=m'(z_3)=-1$, 
while $m'(y)=0$ for any $y\in\G(\Sigma_\varphi)$. 
Thus, $\Sigma_\varphi$ is contained in the $(d-2)$-dimensional 
subspace in $N_\mathbb{R}$ defined by $m=0$ and $m'=0$. 
So, $\G(\Sigma)\cap\mathcal{S}_1$ 
must contain an element other than $x,x_1,x_2,z_1,z_3$. 
Similarly as the above case, we obtain 
the $2$-dimensional nonsingular complete fan $\Sigma'$ 
in $\mathcal{S}_1$. In this case, $X$ is isomorphic to 
the direct product of $S=S_{\Sigma'}$ and $\varphi(E)$. 
%
%
%By considering the minimal resolution $S'\to S$, 
%where $S$ is the projective toric surface associated to 
%the normal fan for the convex hull of $\{x,x_1,x_2\}\cup I$ in the 
%subspace $\mathcal{S}_1\subset N_\mathbb{R}$, we can construct the $d$-dimensional 
%complete fan $\Sigma'$ in $N$ 
%whose associated toric $d$-fold is $S$-bundle over $\varphi(E)$ such that 
%$\G(\Sigma')=\G(\Sigma)$. 
%
%
%
%Since $\mathcal{S}_1$ is a subspace in $N_{\mathbb{R}}$, 
%there exists the $2$-dimensional subfan $\Sigma'\subset\Sigma$ such that $|\Sigma'|=\mathcal{S}_1$. 
%Lemma \ref{anticanonical} tells us that $X$ is isomorphic to an $S$-bundle over $\varphi(E)$, 
%where $S\subset X$ is the torus invariant subsurface corresponding to $\Sigma'$ 
%(see also Lemma \ref{pcbdl}). $S$ has to be 
%a smooth toric weak del Pezzo surface. 
%
%The other case is obvious. 
\end{proof}

%%%%%%%%%%%%%%%%%%%%%%%%%%%%%%%%%%%%
\section{The classification of smooth toric special weak Fano 4-folds}
\label{sec4fold} %%%%%%
%%%%%%%%%%%%%%%%%%%%%%%%%%%%%%
%%%%%%%%%%%%%%%%%%%%%
%%%%%%%%%%%%%
%%%%%%
In this section, we classify all the smooth 
toric special weak Fano $4$-folds 
using the results in the previous section. 
We omit the trivial case, that is, the case where 
a smooth toric special weak Fano $4$-fold $X$ is 
isomorphic to a direct product of lower-dimensional 
subvarieties. In this case, $X$ is trivially a smooth toric 
weakened Fano $4$-fold. 
We use the same notation as in Proposition \ref{3nenagumi} such as 
$x_1+x_2=2x$, $m$, $\mathcal{S}_1$, $\mathcal{S}_2$, $I$, 
$\varphi(E)$ and $\Sigma_{\varphi}$. 

For the classification, we describe the list of smooth 
toric weak del Pezzo surfaces in \cite{sato2} here. 
We should remark that this list is equivalent to the one 
of Gorenstein toric del Pezzo surfaces (see \cite{koelman}). 
Here, we use row vectors $w_i$ to describe elements in $N$. In this classification list, 
we denote by $F_a$ the Hirzebruch surface of degree $a$, 
while we denote by $S_n$ the del Pezzo surface of degree $n$.

%\newpage

\begin{table}[htb]
\begin{center}
\caption{Smooth toric weak del Pezzo surfaces.}
\begin{tabular}{|c|l|l|}
\hline
notation  & $\G(\Sigma)$   \\
\hline\hline

$\mathbb{P}^2$ & \begin{tabular}{l}
$w_1=(1,0),w_2=(0,1),w_3=(-1,-1)$ 
\end{tabular} \\ \hline
$\mathbb{P}^1\times\mathbb{P}^1$ & 
\begin{tabular}{l}
$w_1=(1,0),w_2=(0,1),w_3=(-1,0),w_4=(0,-1)$ 
\end{tabular}
\\ \hline
$F_1$ & \begin{tabular}{l}
$w_1=(1,0),w_2=(0,1),w_3=(-1,0),w_4=(1,-1)$ 
\end{tabular}
\\ \hline
$F_2$ & \begin{tabular}{l} $w_1=(1,0),w_2=(1,1),w_3=(-1,0),w_4=(1,-1)$ 
\end{tabular}
\\ \hline
$S_7$  & \begin{tabular}{l} $w_1=(1,0),w_2=(1,1),w_3=(0,1),w_4=(-1,0),
w_5=(0,-1)$ \end{tabular} \\ \hline
$W_3$ & \begin{tabular}{l} $w_1=(1,0),w_2=(1,1),w_3=(0,1),w_4=(-1,0),
w_5=(1,-1)$ \end{tabular}  \\ \hline
$S_6$ & \begin{tabular}{l} $w_1=(1,0),w_2=(1,1),w_3=(0,1),w_4=(-1,0),
w_5=(-1,-1),$ \\ $w_6=(0,-1)$ 
\end{tabular} \\ \hline
$W^1_4$ & \begin{tabular}{l} $w_1=(1,0),w_2=(1,1),w_3=(0,1),w_4=(-1,0),
w_5=(0,-1),$ \\ $w_6=(1,-1)$ \end{tabular} 
\\ \hline
$W^2_4$ & \begin{tabular}{l} $w_1=(1,0),w_2=(1,1),w_3=(0,1),w_4=(-1,1),
w_5=(-1,0),$ \\ $w_6=(1,-1)$ \end{tabular} 
\\ \hline
$W^3_4$ & \begin{tabular}{l} $w_1=(1,0),w_2=(1,1),w_3=(1,2),w_4=(0,1),
w_5=(-1,0),$ \\ $w_6=(1,-1)$ 
\end{tabular}  \\ \hline
$W^1_5$  & 
\begin{tabular}{l}
$w_1=(1,0),w_2=(1,1),w_3=(0,1),w_4=(-1,1),
w_5=(-1,0),$ \\ $w_6=(0,-1),w_7=(1,-1)$ 
\end{tabular} \\ \hline
$W^2_5$ & \begin{tabular}{l}
$w_1=(1,0),w_2=(1,1),w_3=(0,1),w_4=(-1,0),
w_5=(0,-1),$ \\ $w_6=(1,-2),w_7=(1,-1)$ 
\end{tabular}   \\ \hline
$W^1_6$ & \begin{tabular}{l}
$w_1=(1,0),w_2=(1,1),w_3=(0,1),w_4=(-1,1),
w_5=(-1,0),$ \\ $w_6=(-1,-1),
w_7=(0,-1),w_8=(1,-1)$ 
\end{tabular}   \\ \hline
$W^2_6$ & \begin{tabular}{l}
$w_1=(1,0),w_2=(1,1),w_3=(0,1),w_4=(-1,1),
w_5=(-1,0),$ \\ $w_6=(0,-1),
w_7=(1,-2),w_8=(1,-1)$ 
\end{tabular}   \\ \hline
$W^3_6$ & \begin{tabular}{l}
$w_1=(1,0),w_2=(1,1),w_3=(1,2),w_4=(0,1),
w_5=(-1,0),$ \\ $w_6=(0,-1),w_7=(1,-2),w_8=(1,-1)$ 
\end{tabular}  \\ \hline
$W_7$ & \begin{tabular}{l}
$w_1=(1,0),w_2=(1,1),w_3=(0,1),w_4=(-1,1),
w_5=(-2,1),$ \\ $w_6=(-1,0),w_7=(0,-1),w_8=(1,-2),
w_9=(1,-1)$ 
\end{tabular}  \\ \hline

\end{tabular}
\end{center}
\end{table}

We give the figures for $\mathbb{P}^2$, $\mathbb{P}^1\times\mathbb{P}^1$, 
$F_1$, 
$F_2$, $W_3$ and $W_4^1$ for the reader's convenience. 
These varieties are recurring throughout our classification.

\newpage

\begin{figure}[ht]
\centering
\begin{picture}(380,250)

%%%%%%%%%%%%%%%%%%%%%%%%%% (I)
\put(10,240){$\mathbb{P}^2$}

\put(30,160){\circle*{6}}
\put(10,147){$w_3$}

\put(90,190){\circle*{6}}
\put(97,188){$w_1$}

\put(55,230){$w_2$}
\put(60,220){\circle*{6}}

\put(30,160){\line(1,2){30}}
\put(30,160){\line(2,1){60}}
\put(60,220){\line(1,-1){30}}

\put(60,190){\circle{6}}

%%%%%%%%%%%%%%%%%%%%%%%%%% (I)
\put(140,240){$\mathbb{P}^1\times\mathbb{P}^1$}

\put(160,190){\circle*{6}}
\put(140,188){$w_3$}

\put(220,190){\circle*{6}}
\put(227,188){$w_1$}

\put(185,230){$w_2$}
\put(190,220){\circle*{6}}

\put(185,147){$w_4$}
\put(190,160){\circle*{6}}

\put(190,220){\line(1,-1){30}}
\put(190,220){\line(-1,-1){30}}

\put(190,160){\line(1,1){30}}
\put(190,160){\line(-1,1){30}}

\put(190,190){\circle{6}}

%%%%%%%%%%%%%%%%%%%%%%%%%% (I)
\put(270,240){$F_1$}

\put(290,190){\circle*{6}}
\put(270,188){$w_3$}

\put(350,190){\circle*{6}}
\put(357,188){$w_1$}

\put(315,230){$w_2$}
\put(320,220){\circle*{6}}

\put(357,147){$w_4$}
\put(350,160){\circle*{6}}

\put(320,220){\line(1,-1){30}}
\put(320,220){\line(-1,-1){30}}

\put(350,160){\line(0,1){30}}
\put(350,160){\line(-2,1){60}}

\put(320,190){\circle{6}}

%%%%%%%%%%%%%%%%%%%%%%% F_2

\put(10,100){$F_2$}

\put(30,50){\line(2,1){60}}
\put(30,50){\line(2,-1){60}}
\put(90,80){\line(0,-1){60}}

%\put(30,180){\line(5,3){60}}
%\put(150,30){\line(-5,3){60}}
%\put(90,280){\line(0,-1){64}}

\put(30,50){\circle*{6}}
\put(10,48){$w_3$}

\put(90,80){\circle*{6}}
\put(97,90){$w_2$}

\put(90,50){\circle*{6}}
\put(97,48){$w_1$}

\put(90,20){\circle*{6}}
\put(97,7){$w_4$}

\put(60,50){\circle{6}}

%%%%%%%%%%%%%%%%%%%%% W_3

\put(140,100){$W_3$}

\put(160,50){\line(1,1){30}}
\put(190,80){\line(1,0){30}}
\put(160,50){\line(2,-1){60}}
\put(220,80){\line(0,-1){60}}

%\put(30,180){\line(5,3){60}}
%\put(150,30){\line(-5,3){60}}
%\put(90,280){\line(0,-1){64}}

\put(160,50){\circle*{6}}
\put(220,80){\circle*{6}}
\put(220,50){\circle*{6}}
\put(220,20){\circle*{6}}
\put(190,80){\circle*{6}}

\put(185,90){$w_3$}

\put(140,48){$w_4$}

\put(227,90){$w_2$}

\put(227,48){$w_1$}

\put(227,7){$w_5$}

\put(190,50){\circle{6}}

%%%%%%%%%%%%%%%%%%%%% W_4
\put(270,100){$W_4^1$}

\put(290,50){\line(1,1){30}}
\put(320,80){\line(1,0){30}}
\put(350,80){\line(0,-1){60}}
\put(350,20){\line(-1,0){30}}
\put(320,20){\line(-1,1){30}}

%\put(30,180){\line(5,3){60}}
%\put(150,30){\line(-5,3){60}}
%\put(90,280){\line(0,-1){64}}

\put(290,50){\circle*{6}}
\put(350,80){\circle*{6}}
\put(350,50){\circle*{6}}
\put(350,20){\circle*{6}}
\put(320,80){\circle*{6}}
\put(320,20){\circle*{6}}

\put(320,50){\circle{6}}

\put(315,90){$w_3$}

\put(315,7){$w_5$}

\put(270,48){$w_4$}

\put(357,90){$w_2$}

\put(357,48){$w_1$}

\put(357,7){$w_6$}

\thicklines

\end{picture}
\caption{}
\label{baaiwake}
\end{figure}

%\medskip
%\newpage

\noindent
(I) the case where $I\subset\mathcal{S}_1$. 

Let $S$ be the smooth toric weak del Pezzo surface in $X$ 
corresponding to the fan in $\mathcal{S}_1$. 
In this case, $X$ is an $S$-bundle over $\varphi(E)$ (see Proposition \ref{ichiomain}). 
$\varphi(E)$ is also a smooth toric weak del Pezzo surface. 

\begin{rem}
In this case, for any primitive collection $P\subset\G(\Sigma)$, we have either 
$P\subset\G(\Sigma_\varphi)$ or $P\subset\G(\Sigma)\setminus\G(\Sigma_\varphi)$. 
Moreover, the primitive collections in $\G(\Sigma_\varphi)$ correspond to 
$\varphi(E)$, while the primitive collections in 
$\G(\Sigma)\setminus\G(\Sigma_\varphi)$ correspond to $S$. 
For example, let $X=X_\Sigma$ be the smooth toric special weak Fano $4$-fold of 
type $\mathcal{Z}_{10}$ below. Then, the primitive relations of $\Sigma$ are 
\[
y_1+y_3=w_3,\ y_2+y_4=y_1,
\]
\[
w_1+w_3=w_2,\ w_1+w_4=0,\ w_2+w_4=w_3,\ w_2+w_5=2w_1\mbox{ and }
w_3+w_5=w_1,
\]
where $\G(\Sigma)=\{y_1,y_2,y_3,y_4,w_1,w_2,w_3,w_4,w_5\}$ and 
$\G(\Sigma_\varphi)=\{y_1,y_2,y_3,y_4\}$. 
$\varphi(E)\cong F_1$, while $S\cong W_3$. 
The primitive relations of the $2$-dimensional fan $\Sigma_S$ associated to $S$ are 
\[
w_1+w_3=w_2,\ w_1+w_4=0,\ w_2+w_4=w_3,\ w_2+w_5=2w_1\mbox{ and }
w_3+w_5=w_1,
\]
where $\G(\Sigma_S)=\G(\Sigma)\setminus\G(\Sigma_\varphi)=
\{w_1,w_2,w_3,w_4,w_5\}$. On the other hand, 
by omitting $w_3$ from the above primitive relations, 
we obtain the primitive relations 
\[
y_1+y_3=0\mbox{ and }y_2+y_4=y_1
\]
of the $2$-dimensional fan $\Sigma'$ associated to $\varphi(E)$, 
where $\G(\Sigma')=\G(\Sigma_\varphi)=\{y_1,y_2,y_3,y_4\}$. 
\end{rem}

By Lemma \ref{pcbdl}, $X$ is not isomorphic to 
the direct product $S\times \varphi(E)$ 
if and only if 
there exists a primitive collection $P\subset\G(\Sigma_{\varphi})$ %of $\Sigma_{\varphi}$ 
such that $\sigma(P)\cap\G(\Sigma)\not\subset\G(\Sigma_{\varphi})$. 

Suppose that $\rho(\varphi(E))\ge 2$, where $\rho(\varphi(E))=\#\G(\Sigma_\varphi)$ stands for 
the {\em Picard number} of $\varphi(E)$. In this case, every primitive collection 
of $\Sigma_{\varphi}$ contains exactly two elements. For $y\in N$, 
we denote by $\overline{y}$ the image of $y$ by 
the projection $N_{\mathbb{R}}\to N_{\mathbb{R}}/\mathcal{S}_1$, 
and we denote by $\overline{\Sigma}_\varphi$ 
the natural fan in $N_{\mathbb{R}}/\mathcal{S}_1$ associated to $\varphi(E)$. 
Let $P=\{y_1,y_2\}\subset\G(\Sigma_{\varphi})$ be a primitive collection, 
and assume that $\sigma(P)\cap\G(\Sigma)\not\subset\G(\Sigma_{\varphi})$. 
We remark that $\{\overline{y}_1,\overline{y}_2\}$ is also 
a primitive collection of $\overline{\Sigma}_\varphi$. 
If the primitive relation of $\{\overline{y}_1,\overline{y}_2\}$ is 
either 
\[
\overline{y}_1+\overline{y}_2=2\overline{y}_3\mbox{ or }
\overline{y}_1+\overline{y}_2=\overline{y}_3+\overline{y}_4, 
\]
where $\overline{y}_3,\overline{y}_4\in\G(\overline{\Sigma}_\varphi)$, 
then the primitive relation of $P$ is also either 
\[
y_1+y_2=2y_3\mbox{ or }
y_1+y_2=y_3+y_4,
\]
because $X$ is a weak Fano manifold, that is, 
$\deg P\ge 0$. This contradicts the assumption 
$\sigma(P)\cap\G(\Sigma)\not\subset\G(\Sigma_{\varphi})$. Next, 
suppose that the primitive relation of $\{\overline{y}_1,\overline{y}_2\}$ is 
\[
\overline{y}_1+\overline{y}_2=\overline{y}_3. 
\]
We may assume this primitive relation is extremal, that is, 
$\langle\overline{y}_1,\overline{y}_3\rangle$ and 
$\langle\overline{y}_3,\overline{y}_2\rangle$ are $2$-dimensional 
cones in $\overline{\Sigma}_\varphi$. 
Then, for some element $z\not\in\G(\Sigma_{\varphi})$, the primitive relation of $P$ 
has to be 
\[
y_1+y_2=y_3+z
\]
and 
$\{y_1,y_2,y_3,z\}$ spans a $3$-dimensional subspace in $N_{\mathbb{R}}$. 
On the other hand, there exists an element in $\G(\Sigma)$ in the relative interior 
of $\langle y_1,y_2\rangle$ by Proposition \ref{keyprop}. 
This is impossible. So, the remaining case is that 
the primitive relation of $\{\overline{y}_1,\overline{y}_2\}$ is 
\[
\overline{y}_1+\overline{y}_2=0.
\]
Let us show that $\{\overline{y}_1,\overline{y}_2\}$ is extremal. 
If $\{\overline{y}_1,\overline{y}_2\}$ is not extremal, then there 
exist extremal primitive collection $P_1,\ldots,P_l$ of $\overline{\Sigma}_\varphi$ $(l\ge 2)$ 
such that 
$r(\{\overline{y}_1,\overline{y}_2\})=a_1r(P_1)+\cdots+a_lr(P_l)$ 
for $a_1,\ldots,a_l\in\mathbb{Z}_{>0}$ 
(see Remark \ref{odasense}). Obviously, $\deg P_i\le 1$ 
for any $i$. This means that the primitive relation of $P_i$ 
in $\Sigma$ is same as the one in $\overline{\Sigma}_{\varphi}$ 
for any $i$ by the above argument. 
This is impossible because the primitive relation of 
$\{\overline{y}_1,\overline{y}_2\}$ in $\Sigma$ and 
the one in $\overline{\Sigma}_\varphi$ are distinct. 
Thus, we obtain the following:

\begin{prop}
Let $X$ be a smooth toric special weak Fano $4$-fold which 
is isomorphic to an $S$-bundle over $\varphi(E)$, where 
$S$ and $\varphi(E)$ are smooth toric weak del Pezzo surfaces, 
and $\rho(\varphi(E))\ge 2$. 
If $X\not\cong S\times \varphi(E)$, then $\varphi(E)$ has a Fano contraction, 
that is, a $\mathbb{P}^1$-bundle structure. 
\end{prop}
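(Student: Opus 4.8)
The plan is to analyze the primitive collections of $\overline{\Sigma}_\varphi$, the fan of $\varphi(E)$ in the two-dimensional quotient $N_{\mathbb{R}}/\mathcal{S}_1$, and show that under the hypothesis $X\not\cong S\times\varphi(E)$ one of them must have primitive relation of the form $\overline{y}_1+\overline{y}_2=0$ with $\{\overline{y}_1,\overline{y}_2\}$ extremal; this extremal primitive relation is exactly a $\mathbb{P}^1$-bundle structure on $\varphi(E)$, i.e. a Fano contraction. By Proposition \ref{pcbdl} (applied with $G_1$ the rays lying in $\mathcal{S}_1$ and $G_2=\G(\Sigma_\varphi)$), the hypothesis $X\not\cong S\times\varphi(E)$ gives a primitive collection $P=\{y_1,y_2\}\subset\G(\Sigma_\varphi)$ with $\sigma(P)\cap\G(\Sigma)\not\subset\G(\Sigma_\varphi)$. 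Since $\rho(\varphi(E))\ge 2$, every primitive collection of $\overline{\Sigma}_\varphi$ has exactly two elements, so $\{\overline{y}_1,\overline{y}_2\}$ is a primitive collection of $\overline{\Sigma}_\varphi$.

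The heart of the argument is a case analysis on the primitive relation of $\{\overline{y}_1,\overline{y}_2\}$ in $\overline{\Sigma}_\varphi$, precisely the four possibilities enumerated in the paragraph preceding this proposition: $\overline{y}_1+\overline{y}_2=2\overline{y}_3$, $\overline{y}_1+\overline{y}_2=\overline{y}_3+\overline{y}_4$, $\overline{y}_1+\overline{y}_2=\overline{y}_3$, and $\overline{y}_1+\overline{y}_2=0$. In the first two cases the weak Fano condition $\deg P\ge 0$ forces the lift of the relation to $\Sigma$ to have the same shape ($y_1+y_2=2y_3$ or $y_1+y_2=y_3+y_4$ with $y_3,y_4\in\G(\Sigma_\varphi)$), contradicting $\sigma(P)\cap\G(\Sigma)\not\subset\G(\Sigma_\varphi)$. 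In the third case the lift must be $y_1+y_2=y_3+z$ with $z\notin\G(\Sigma_\varphi)$; but then $\{y_1,y_2,y_3,z\}$ spans a $3$-dimensional space, so $\{y_1,y_2\}$ cannot have $\dim\sigma(P)=2$ as required, contradicting Proposition \ref{keyprop} (which forces an interior lattice point of $\langle y_1,y_2\rangle$). Hence only $\overline{y}_1+\overline{y}_2=0$ survives.

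Finally I must promote this to an \emph{extremal} such relation. If $\{\overline{y}_1,\overline{y}_2\}$ were not extremal in $\overline{\Sigma}_\varphi$, then by Theorem \ref{toricconethm} and Remark \ref{odasense} we could write $r(\{\overline{y}_1,\overline{y}_2\})=\sum_i a_i r(P_i)$ with extremal primitive collections $P_i$ of $\overline{\Sigma}_\varphi$, $a_i\in\mathbb{Z}_{>0}$, $l\ge 2$; each $P_i$ has $\deg P_i\le 1$, hence by the case analysis above its primitive relation in $\Sigma$ coincides with the one in $\overline{\Sigma}_\varphi$, forcing $r(\{\overline{y}_1,\overline{y}_2\})$ to lift identically — impossible since its relation differs between $\Sigma$ and $\overline{\Sigma}_\varphi$. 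So some extremal primitive collection of $\overline{\Sigma}_\varphi$ has primitive relation $\overline{y}_1+\overline{y}_2=0$, giving $\varphi(E)$ a $\mathbb{P}^1$-bundle (Fano contraction) structure. I expect the main obstacle to be bookkeeping in the third case: verifying carefully that $\sigma(P)\cap\G(\Sigma)\not\subset\G(\Sigma_\varphi)$ together with $\overline{y}_1+\overline{y}_2=\overline{y}_3$ really forces the lifted relation to introduce a lattice point off the plane $\mathcal{S}_1$, so that Proposition \ref{keyprop} is genuinely violated; the rest is a clean reuse of the preceding paragraph's dichotomy.
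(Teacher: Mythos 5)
Your proposal is correct and follows essentially the same route as the paper, whose proof is exactly the running text preceding the proposition: use Proposition \ref{pcbdl} to produce a primitive collection $\{y_1,y_2\}\subset\G(\Sigma_\varphi)$ with $\sigma(P)\cap\G(\Sigma)\not\subset\G(\Sigma_\varphi)$, eliminate the downstairs relations $\overline{y}_1+\overline{y}_2=2\overline{y}_3$, $\overline{y}_3+\overline{y}_4$ via the weak Fano degree bound and $\overline{y}_1+\overline{y}_2=\overline{y}_3$ via Proposition \ref{keyprop}, and handle extremality of $\overline{y}_1+\overline{y}_2=0$ by decomposing into extremal classes as in Remark \ref{odasense} and noting their relations lift unchanged. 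The only point to watch (present in the paper as well) is the claim $\deg P_i\le 1$, since a summand of degree $2$ is numerically possible; but such a $P_i$ would itself be an extremal relation $\overline{u}_1+\overline{u}_2=0$ and hence already yields the desired Fano contraction, so the conclusion is unaffected.
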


%By Table $1$, the possibilities for $Y$ are $\mathbb{P}^2$, 
%$\mathbb{P}^1\times  \mathbb{P}^1$, $F_1$ and $F_2$. 

On the other hand, for the possiblities for the fiber $S$, 
the following holds. This fact makes our 
classification easier.

\begin{prop}\label{nihonbashi}
If $X$ is not isomorphic to the direct product 
$S\times \varphi(E)$, then $S$ is one of the following: $F_2$, $W_3$ and $W_4^1$. 
\end{prop}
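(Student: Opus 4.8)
The plan is to read off the constraint on the fiber $S$ from the existence of the special primitive crepant contraction $\varphi$ together with the structure dichotomy of Proposition \ref{3nenagumi} and Theorem \ref{ichiomain}. We are in case (I), so $I\subset\mathcal S_1$, the fan $\Sigma'$ with $|\Sigma'|=\mathcal S_1$ is $2$-dimensional, and $S=X_{\Sigma'}$ is a smooth toric weak del Pezzo surface containing the three rays $\mathbb R_{\ge 0}x,\mathbb R_{\ge 0}x_1,\mathbb R_{\ge 0}x_2$ with the primitive relation $x_1+x_2=2x$. Moreover $\varphi$ is special, so there is $m\in M$ with $m(x)=m(x_1)=m(x_2)=-1$, $m(y)=0$ for $y\in\G(\Sigma_\varphi)$, and $m(v)\ge -1$ for all $v\in\G(\Sigma)$. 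The key point is that $m$ restricts to a linear functional on $\mathcal S_1$ taking value $-1$ at all three of $x,x_1,x_2$; equivalently, on the surface $S$ there is a torus invariant curve (the strict transform of the $\mathbb P^1$-bundle section) whose contraction is crepant, i.e. $S$ itself admits a primitive crepant contraction realized by the relation $x_1+x_2=2x$. So $S$ is a smooth toric weak del Pezzo surface that is \emph{not} del Pezzo, and among its primitive collections one is $\{x_1,x_2\}$ with relation $x_1+x_2=2x$.

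First I would translate ``$S$ has a primitive relation of the form $w+w'=2w''$'' into a condition on the classification list (the Table). A smooth toric surface has a $(-2)$-curve exactly when it is a weak del Pezzo but not a del Pezzo surface; from the Table these are precisely $F_2$, $W_3$, $W_4^1$, $W_4^2$, $W_4^3$, and the $W_n$ of higher index. So the statement is that only the first three of these can occur as fibers. Second, I would bring in the hypothesis $X\not\cong S\times\varphi(E)$: by Proposition \ref{pcbdl} (cf. the discussion just above), this forces the existence of a primitive collection $P\subset\G(\Sigma_\varphi)$ with $\sigma(P)\cap\G(\Sigma)\not\subset\G(\Sigma_\varphi)$, i.e. a genuine ``twisting'' of the bundle. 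The way this interacts with the fiber is through Proposition \ref{keyprop}: every primitive collection $\{z_1,z_2\}$ with $z_1+z_2\neq 0$ must have a lattice point of $\G(\Sigma)$ in the relative interior of $\langle z_1,z_2\rangle$. Applied inside $\mathcal S_1$, this constrains the fan $\Sigma'$: each $2$-dimensional cone of $\Sigma'$ that is ``large'' must be subdivided. I would check, case by case against the Table, which weak del Pezzo surfaces $S$ possessing the relation $x_1+x_2=2x$ are compatible both with Proposition \ref{keyprop} applied to all primitive collections inside $\mathcal S_1$, \emph{and} with the existence of the extra crossing relation coming from the non-triviality of the bundle; the latter, via the relation-combining arguments already used in the proof of Proposition \ref{keyprop} (e.g. relations of the shape $\frac{b_1}{a_1}z_1'+\cdots=\frac{1}{a_1}x_1+\cdots$), rules out fibers that are ``too rich'' — concretely, $W_4^2$, $W_4^3$, $W_5^i$, $W_6^i$, $W_7$ get eliminated because a twist would produce a primitive collection spanning a $3$-dimensional subspace or one with no interior lattice point, contradicting Proposition \ref{keyprop}.

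Concretely the steps are: (1) record that $S$ is a smooth toric weak del Pezzo surface, not del Pezzo, carrying the crepant relation $x_1+x_2=2x$, and that $\G(\Sigma_\varphi)$ lies on the hyperplane $\{m=0\}$ while $x,x_1,x_2$ lie on $\{m=-1\}$; (2) enumerate from the Table the candidates for $S$: $F_2, W_3, W_4^1, W_4^2, W_4^3$, and higher $W_n$; (3) for each candidate, use the non-triviality $X\not\cong S\times\varphi(E)$ to produce a specific primitive collection of $\Sigma$ mixing fiber and base directions, and derive a contradiction with Proposition \ref{keyprop} for all candidates other than $F_2, W_3, W_4^1$; (4) conclude. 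I expect the main obstacle to be step (3): it is not a single clean argument but a finite case analysis, and for the borderline surfaces — especially $W_4^2$ and $W_4^3$, which are close to $W_4^1$ — one must be careful to exhibit the right forbidden primitive collection and verify that it genuinely violates Proposition \ref{keyprop} (either by spanning a $3$-space, as in the $y_1+y_2=y_3+z$ sub-case already treated above, or by having empty relative interior). Organizing this so that each eliminated surface is dispatched by one explicit relation, rather than an ad hoc computation, is the delicate part; once the bookkeeping is set up, each individual verification is short.
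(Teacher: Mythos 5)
Your first two steps are sound: since $x_1+x_2=2x$ is a relation inside $\mathcal{S}_1$, the fiber $S$ carries a crepant relation $w+w'=2w''$, hence is one of the non--del Pezzo surfaces $F_2,W_3,W_4^1,W_4^2,W_4^3,W_5^i,W_6^i,W_7$ of Table 1, and the whole content of the proposition is the elimination of everything after $W_4^1$. The gap is in your step (3): the mechanism you propose --- ``the twist produces a primitive collection mixing fiber and base directions which violates Proposition \ref{keyprop}'' --- is not the one that does the work, and on inspection it cannot work. In an $S$-bundle there are no primitive collections mixing fiber and base rays at all (this is exactly the situation of Proposition \ref{pcbdl}), and the twisting relations have the shape $y_1+y_2=z$ or $y_1+y_2=2z$ with $z\in\G(\Sigma)\cap\mathcal{S}_1$, which satisfy the conclusion of Proposition \ref{keyprop}. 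Concretely, take the $W_4^2$-bundle over $\mathbb{P}^1\times\mathbb{P}^1$ with fiber generators $w_1,\dots,w_6$ in the first two coordinates and lifted base generators $y_1=(0,0,1,0)$, $y_2=(0,0,0,1)$, $y_3=(0,1,-1,0)$, $y_4=(0,0,0,-1)$, so that $y_1+y_3=w_3$ and $y_2+y_4=0$. This $X$ is weak Fano, the contraction attached to $w_2+w_6=2w_1$ satisfies Definition \ref{specon} with $m=(-1,0,0,0)$, and \emph{every} two-element primitive collection with nonzero sum has a generator of $\G(\Sigma)$ in the relative interior of the cone it spans; so no contradiction with Proposition \ref{keyprop} can be extracted from the data you allow yourself. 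What actually fails is Definition \ref{specon}(3) for the second crepant relation $w_2+w_4=2w_3$ of the fiber: its functional $m'$ would have to vanish on the lifted base generators, giving $m'(w_3)=m'(y_1+y_3)=0$, against $m'(w_3)=-1$. In other words, the indispensable ingredient is the specialness of the \emph{other} crepant contractions supported in $\mathcal{S}_1$, which your sketch never invokes.

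That is exactly how the paper argues, and it removes the case-by-case hunt for forbidden primitive collections: the lemma following the proposition states that if two special primitive crepant contractions with data in $\mathcal{S}_1$ have linearly independent functionals $m,m'$, then all generators of $\G(\Sigma_\varphi)$ lie in the $2$-dimensional subspace $\ker m\cap\ker m'$, so by Lemma \ref{anticanonical} $X\cong S\times\varphi(E)$. Since Definition \ref{specon}(3) forces the restriction of $m$ to $\mathcal{S}_1$ (it is $-1$ on the three rays of the relation), one only has to read off from Table 1 that $W_4^2$, $W_4^3$, $W_5^i$, $W_6^i$, $W_7$ each possess two relations $w+w'=2w''$ with non-proportional functionals (for $W_4^2$: $w_2+w_6=2w_1$ and $w_2+w_4=2w_3$; for $W_4^3$: $w_2+w_6=2w_1$ and $w_3+w_5=2w_4$), whereas for $F_2$, $W_3$, $W_4^1$ all such functionals are proportional. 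If you want to salvage your plan, replace ``contradiction with Proposition \ref{keyprop}'' by ``contradiction with Definition \ref{specon}(3) applied to the extra $(-2)$-relations of the fiber''; with that substitution your step (3) collapses into the paper's two-line lemma.
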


Proposition \ref{nihonbashi} is a 
consequence of 
the following lemma and Table $1$. 
Remark that $S$ admits at least one crepant contraction. 

\begin{lem}
Suppose that there exist two special primitive 
crepant contractions associated to 
the extremal primitive relations 
$x_1+x_2=2x$ and 
$x'_1+x'_2=2x'$, where 
$x_1,x_2,x,x'_1,x'_2,x'\in\G(\Sigma)\cap
\mathcal{S}_1$ such that associated elements 
$m,m'\in M$ as in Definition \ref{specon} are linearly independent. 
Then, $X\cong S\times \varphi(E)$.
\end{lem}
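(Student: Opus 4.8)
The plan is to exploit the two elements $m,m'\in M$ attached to the two special contractions, together with the structure results already proved. First I would observe that since both $x_1+x_2=2x$ and $x'_1+x'_2=2x'$ come from special primitive crepant contractions with $\{x_1,x_2,x\},\{x'_1,x'_2,x'\}\subset\mathcal{S}_1$ and $\mathcal{S}_1$ is a $2$-dimensional subspace, all six vectors lie in the plane $\mathcal{S}_1$. By Lemma \ref{anticanonical}, $X$ is determined by $\G(\Sigma)$, and by Theorem \ref{ichiomain}(1) we are in the bundle case: $X$ is an $S$-bundle over $\varphi(E)$ with $S$ the toric weak del Pezzo surface whose fan lives in $\mathcal{S}_1$. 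So it suffices to show the bundle is trivial, i.e. $\sigma(P)\cap\G(\Sigma)\subset\G(\Sigma_\varphi)$ for every primitive collection $P\subset\G(\Sigma_\varphi)$, equivalently (by Proposition \ref{pcbdl}) that no primitive relation forces a generator of $S$ to appear on the right-hand side of a relation coming from the base.

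Next I would use the two functionals. By Definition \ref{specon}(3), $m$ vanishes on all of $\G(\Sigma_\varphi)$ and equals $-1$ on $x,x_1,x_2$; likewise $m'$ vanishes on $\G(\Sigma_\varphi)$ and equals $-1$ on $x',x'_1,x'_2$. Since $m,m'$ are linearly independent, the linear map $N_\mathbb{R}\to\mathbb{R}^2$, $v\mapsto(m(v),m'(v))$, has the $(d-2)$-dimensional space spanned by $\G(\Sigma_\varphi)$ in its kernel and restricts to an isomorphism on $\mathcal{S}_1=\langle x,x_1\rangle_{\mathbb{R}}$ (after identifying $\mathcal{S}_1$ with $N_\mathbb{R}/\mathrm{span}(\G(\Sigma_\varphi))$ — here one checks $\mathcal{S}_1$ is a complement, using that $m(x)=m(x_1)=-1\ne 0$ so $x,x_1\notin\mathrm{span}(\G(\Sigma_\varphi))$, and a dimension count). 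The key point is then: for any $w\in\G(\Sigma)$, the pair $(m(w),m'(w))$ determines the $\mathcal{S}_1$-component of $w$. Now suppose, for contradiction, that $X$ is not the direct product. Then there is a primitive collection $P=\{y_1,y_2\}\subset\G(\Sigma_\varphi)$ with $\sigma(P)\cap\G(\Sigma)\not\subset\G(\Sigma_\varphi)$, so some $z\in\sigma(P)\cap\G(\Sigma)$ has nonzero $\mathcal{S}_1$-component, i.e. $(m(z),m'(z))\ne(0,0)$. But $y_1+y_2=\cdots+cz+\cdots$ (with $c>0$) forces, after applying $m$ and $m'$ and using $m(y_i)=m'(y_i)=0$ together with $m,m'\ge 0$ on all remaining generators appearing on the right-hand side... — and here the needed monotonicity is exactly that $m(v)\ge -1$, $m'(v)\ge -1$ on $\G(\Sigma)$, plus the fact that the only generators with $m$ or $m'$ negative are the six in $\mathcal{S}_1$, which cannot appear in $\sigma(P)$ for a base primitive collection. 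So $0=m(y_1+y_2)=\sum c_j m(v_j)\ge c\,m(z)$ gives $m(z)\le 0$, and symmetrically $m'(z)\le 0$; combined with $m(z),m'(z)\ge -1$ and the classification of which sums can vanish, one pins down $(m(z),m'(z))$ to a value that is inconsistent with $z\in\G(\Sigma)$ and the weak-Fano inequality $\deg P\ge 0$ — contradicting Proposition \ref{keyprop} in the same way as in case (I) of the classification.

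I would structure the contradiction precisely as in the paragraph preceding Proposition \ref{nihonbashi}: enumerate the possible primitive relations of $P=\{y_1,y_2\}$ in the base fan $\overline{\Sigma}_\varphi$ (namely $\overline y_1+\overline y_2$ equal to $2\overline y_3$, $\overline y_3+\overline y_4$, $\overline y_3$, or $0$), and in each case show that lifting to $\Sigma$ cannot introduce a generator outside $\G(\Sigma_\varphi)$: the first two cases contradict $\deg P\ge 0$ directly, the third contradicts Proposition \ref{keyprop} (the four vectors would span a $3$-space), and the fourth is ruled out by the extremal-decomposition argument already given. The extra input from the hypothesis — two independent $m,m'$ — is what guarantees $S$ itself is "large enough" to generate $\mathcal{S}_1$ and hence that the above case analysis is exhaustive; concretely, having two independent special contractions inside $\mathcal{S}_1$ is what lets us treat $\mathcal{S}_1$ as a genuine quotient and run the $\overline{\Sigma}_\varphi$ argument.

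The main obstacle I anticipate is the bookkeeping in the fourth case ($\overline y_1+\overline y_2=0$): one must verify that the extremal decomposition $r(\{\overline y_1,\overline y_2\})=\sum a_ir(P_i)$ over $\mathbb{Z}_{>0}$ (Remark \ref{odasense}) has every $P_i$ of degree $\le 1$, so that each $P_i$ lifts with the same primitive relation, forcing $\overline y_1+\overline y_2$ to have the same relation in $\Sigma$ as in $\overline{\Sigma}_\varphi$ — impossible since $0$ is not a relation in $\Sigma$ among generators that are not antipodal there. Making this airtight requires knowing that no $P_i$ can itself be the special collection $\{x_1,x_2\}$ or involve $\mathcal{S}_1$-generators, which follows because $P_i\subset\G(\Sigma_\varphi)$ by construction of the decomposition in the base. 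Everything else is a short computation with $m$ and $m'$ and an appeal to Table $1$ to read off that the only fibers $S$ admitting \emph{two} independent special (i.e. $(0,2)$-type) contractions are absent — which is why, contrapositively, the presence of two independent $m,m'$ forces triviality.
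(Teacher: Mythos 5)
You in fact reach the paper's entire proof in your second paragraph and then abandon it. Once you know that $m$ and $m'$ both vanish on $\G(\Sigma_\varphi)$, the horizontal generators all lie in the $2$-dimensional subspace $\ker m\cap\ker m'$, which (by the dimension count you yourself sketch) is spanned by $\G(\Sigma_\varphi)$ and meets $\mathcal{S}_1$ only in $0$. So $\G(\Sigma)$ is split between two complementary planes, the fan is the product of the fan in $\mathcal{S}_1$ and the fan spanned by $\G(\Sigma_\varphi)$ (use Lemma \ref{anticanonical} or Proposition \ref{pcbdl}), and $X\cong S\times\varphi(E)$. That is the whole argument — no case analysis, no restriction on $\rho(\varphi(E))$, and it covers the base $\mathbb{P}^2$ (three-element primitive collections), which your contradiction route silently ignores by writing $P=\{y_1,y_2\}$.

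The contradiction route you actually structure has a genuine gap exactly at the decisive case. In your enumeration, the case $\overline{y}_1+\overline{y}_2=0$ is dismissed as ``ruled out by the extremal-decomposition argument already given.'' That is false: the decomposition argument of Remark \ref{odasense} only excludes the subcase where $\{\overline{y}_1,\overline{y}_2\}$ is \emph{not} extremal in $\overline{\Sigma}_\varphi$. When it is extremal (a Fano contraction of the base), the lift $y_1+y_2=z$ or $y_1+y_2=2z$ with $z\in\G(\Sigma)\cap\mathcal{S}_1$, $m(z)=0$, genuinely occurs — it is precisely the twist present in the nontrivial bundles $\mathcal{Z}_2,\ldots,\mathcal{Z}_{13}$, all of which satisfy every other constraint you impose. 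This is the one place where the hypothesis of two independent functionals must be used, and you never deploy it there: applying $m'$ to the lifted relation gives $m'(z)=0$ as well, and injectivity of $(m,m')$ on $\mathcal{S}_1$ (which you did establish) forces $z=0$, the desired contradiction. Your closing appeal to Table $1$ — that fibers admitting two independent special contractions ``are absent'' — is both incorrect (e.g.\ $W^1_6$ and $W_7$ admit crepant contractions with linearly independent $m$'s; that is exactly why Proposition \ref{nihonbashi} restricts $S$ to $F_2$, $W_3$, $W^1_4$ in the non-product case) and circular, since it presupposes the lemma being proved. So the ingredients are all in your write-up, but the step the hypothesis exists for is never correctly executed; replacing the case analysis by the direct kernel argument above closes the proof.
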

\begin{proof}
Put $\G(\Sigma_{\varphi})=\{y_1,\ldots,y_l\}$. 
Then, we have 
\[
m(y_1)=\cdots=m(y_l)=m'(y_1)=\cdots=m'(y_l)=0.
\]
This means that $y_1,\ldots,y_l$ are 
contained in a $2$-dimensional subspace 
in $N_{\mathbb{R}}$. Namely, $X$ is isomorphic to 
the direct product of $S$ and $\varphi(E)$. 
\end{proof}

Thus, 
the possibilities for $\varphi(E)$ are $\mathbb{P}^2$, 
$\mathbb{P}^1\times  \mathbb{P}^1$, 
$F_1$ and $F_2$ by Table $1$, while 
the possibilities for $S$ are $F_2$, 
$W_3$ and $W_4^1$ by Proposition \ref{nihonbashi}. 
Here, for 
the reader's convenience, 
we explicitly describe the values $m(w_i)$ 
for the element $m\in M$ associated to a 
special primitive crepant contraction 
(we use the same notation as in Table $1$):
\begin{itemize}
\item $S\cong F_2$: The primitive relation associated 
to the special primitive crepant contraction is $w_2+w_4=2w_1$. 
$m(w_1)=m(w_2)=m(w_4)=-1$ and $m(w_3)=1$. 
\item $S\cong W_3$: The primitive relation associated 
to the special primitive crepant contraction is $w_2+w_5=2w_1$. 
$m(w_1)=m(w_2)=m(w_5)=-1$, $m(w_3)=0$ and $m(w_4)=1$. 
\item $S\cong W_4^1$:  The primitive relation associated 
to the special primitive crepant contraction is $w_2+w_6=2w_1$. 
$m(w_1)=m(w_2)=m(w_6)=-1$, $m(w_3)=m(w_5)=0$ and $m(w_4)=1$. 
\end{itemize}

\medskip

\underline{$\varphi(E)\cong\mathbb{P}^2$}\quad 
Let $\G(\Sigma_{\varphi})=\{y_1,y_2,y_3\}$. 
In this case, $\{y_1,y_2,y_3\}$ is the unique primitive collection 
in $\Sigma_{\varphi}$, and we have the primitive relation 
\[
\overline{y}_1+\overline{y}_2+\overline{y}_3=0
\]
in $\overline{\Sigma}_\varphi$. Obviously, $\{y_1,y_2,y_3\}$ is an extremal 
primitive collection. So, the possibilities for its primitive relation are 
\[
y_1+y_2+y_3=az_1\mbox{ and }y_1+y_2+y_3=z_1+z_2
\]
for $z_1,z_2\in\G(\Sigma)\setminus\G(\Sigma_{\varphi})$ 
and $a=1,2$ because 
the associated extremal ray has to be $K_X$-negative 
(otherwise, this primitive crepant contraction would be non-special). 
For the first case, $am(z_1)=m(y_1+y_2+y_3)=m(y_1)+m(y_2)+m(y_3)=0$. 
For the second case, similarly, we have $m(z_1)+m(z_2)=0$. 
However, since $m(z_1)$ and $m(z_2)$ are at least $-1$, we have 
three cases $(m(z_1),m(z_2))=(0,0)$, 
$(m(z_1),m(z_2))=(-1,1)$ and $(m(z_1),m(z_2))=(1,-1)$. 
The first case does not occur because 
$\langle z_1,z_2\rangle\in\Sigma$. 

By using the above arguments and Table $1$, we can easily classify all the 
smooth toric special weak Fano $4$-folds as follows. In the list, 
we use the notation in Table $1$ for $S$. $X$ is determined by giving 
the primitive relation for $\{y_1,y_2,y_3\}$. 

%\newpage

\begin{table}[htb]
\begin{center}
\caption{The case where $I\subset\mathcal{S}_1$ and $\varphi(E)\cong{\mathbb{P}}^2$.}
\begin{tabular}{|r||c||c|c|c||c|c|}
\hline
notation  & $\mathcal{Z}_1$ & $\mathcal{Z}_2$ & $\mathcal{Z}_3$ & $\mathcal{Z}_4$ & $\mathcal{Z}_5$ & $\mathcal{Z}_6$ \\
\hline\hline

$S\cong$ & $F_2$ & $W_3$ & $W_3$ & $W_3$ & $W_4^1$ & $W_4^1$ \\ \hline
$y_1+y_2+y_3=$ & $w_2+w_3$ & $w_3$ & $2w_3$ & $w_4+w_5$ 
& $w_3$ & $2w_3$ \\ \hline

\end{tabular}
\end{center}
\end{table}

\underline{$\varphi(E)\cong \mathbb{P}^1\times  \mathbb{P}^1$, $F_1$ or $F_2$}\quad 
For any extremal primitive relation $\overline{y}_1+\overline{y}_2=0$ in 
$\overline{\Sigma}_\varphi$, $\{y_1,y_2\}$ is also extremal, because 
$\{y_1,y_2\}\cap P=\emptyset$ for any primitive collection $P\neq\{y_1,y_2\}$ 
of $\Sigma$. So, if the associated contraction morphism is crepant, 
then it has to be special. Thus, 
the possibilities for the primitive relation of $\{y_1,y_2\}$ 
are 
\[
y_1+y_2=z\mbox{ and }y_1+y_2=2z,
\] 
where $z\in\G(\Sigma)\setminus\G(\Sigma_{\varphi})$ and $m(z)=0$. In the second case, 
%the primitive crepant contraction associated to $y_1+y_2=2z$ 
%has to be special. So, 
there exists $m'\in M$ associated to this primitive relation. 
For $z_1,z_2\not\in\G(\Sigma_{\varphi})$ such 
that $\langle z_1,z\rangle,\langle z,z_2\rangle\in\Sigma$, 
we have $m'(z_1)=m'(z_2)=0$. Since $z$, $z_1$ and $z_2$ are contained in a 
$2$-dimensional subspace, $z_1$ and $z_2$ have to be 
centrally symmetric, that is, $z_1+z_2=0$. 
However, this phenomenon does not occur (see the classification below).

By using these results, we can do the classification easily. 
The classification lists are as follows.

\medskip

Let $\varphi(E)\cong\mathbb{P}^1\times\mathbb{P}^1$, and 
$\overline{y}_1+\overline{y}_3=0,\overline{y}_2+\overline{y}_4=0$ 
the primitive relations for $\varphi(E)$. Then, 
$X$ is determined by giving the primitive relations for 
$\{y_1,y_3\}$ and $\{y_2,y_4\}$. We remark that if either 
$y_1+y_3=0$ or $y_2+y_4=0$ holds, then $X$ is isomorphic to 
$\mathbb{P}^1\times V$ for a smooth toric special weak Fano $3$-fold 
$V$. As before, we omit this trivial case. 

\newpage

\begin{table}[htb]
\begin{center}
\caption{The case where $I\subset\mathcal{S}_1$ and $\varphi(E)\cong{\mathbb{P}}^1
\times\mathbb{P}^1$.}
\begin{tabular}{|r||c||c|c|}
\hline
notation  &   $\mathcal{Z}_7$ & $\mathcal{Z}_8$ & $\mathcal{Z}_{9}$   \\
\hline\hline

$S\cong$  & $W_3$  & $W_4^1$ & $W_4^1$  \\ \hline
$y_1+y_3=$  &  $w_3$ & $w_3$ 
& $w_3$  \\ \hline
$y_2+y_4=$  & $w_3$  & $w_3$ 
& $w_5$  \\ \hline

\end{tabular}
\end{center}
\end{table}

%\medskip

Let $\varphi(E)\cong F_1$, and 
$\overline{y}_1+\overline{y}_3=0,\overline{y}_2+\overline{y}_4=\overline{y}_1$ 
the primitive relations for $\varphi(E)$. Then, 
$X$ is determined by giving the primitive relation for 
$\{y_1,y_3\}$.

\begin{table}[htb]
\begin{center}
\caption{The case where $I\subset\mathcal{S}_1$ and $\varphi(E)\cong F_1$.}
\begin{tabular}{|r||c||c|}
\hline
notation  & $\mathcal{Z}_{10}$ & $\mathcal{Z}_{11}$   \\
\hline\hline

$S\cong$ & $W_3$ & $W_4^1$   \\ \hline
$y_1+y_3=$ & $w_3$ & $w_3$   \\ \hline

\end{tabular}
\end{center}
\end{table}

Let $\varphi(E)\cong F_2$, and 
$\overline{y}_1+\overline{y}_3=0,\overline{y}_2+\overline{y}_4=2\overline{y}_1$ 
the primitive relations for $\varphi(E)$. Then, 
$X$ is determined by giving the primitive relation for 
$\{y_1,y_3\}$.

%\newpage

\begin{table}[htb]
\begin{center}
\caption{The case where $I\subset\mathcal{S}_1$ and $\varphi(E)\cong F_2$.}
\begin{tabular}{|r||c||c|}
\hline
notation  & $\mathcal{Z}_{12}$ & $\mathcal{Z}_{13}$   \\
\hline\hline

$S\cong$ & $W_3$ & $W_4^1$   \\ \hline
$y_1+y_3=$ & $w_3$ & $w_3$   \\ \hline

\end{tabular}
\end{center}
\end{table}

\begin{rem}
$\mathcal{Z}_{12}$ and $\mathcal{Z}_{13}$ have 
two special primitive crepant contractions.
\end{rem}

%%%%%%%%%%%%%%%%%%%%%%%%%%%%%%%%%%%%%%%

\noindent
(II) the case where $I\subset\mathcal{S}_2$. 

First, we remark that for any $v\in\G(\Sigma)$, 
\[
 m(v) \left\{ \begin{array}{lcl}
    =-1 &  \cdots & v\in\{x_1,x_2,x\} \\
    =0 & \cdots & v\in\G(\Sigma_{\varphi}) \\
    \ge 1 & \cdots & \mbox{otherwise }(v\in I)
  \end{array} \right.
\]
because if $v\in I$, then $v\in\mathbb{R}_{\ge 0}(-x)+\sigma
\subset\mathcal{S}_2$ 
for some $\sigma\in\Sigma_{\varphi}$ 
by the definition of $\mathcal{S}_2$.  

Suppose that $\rho(\varphi(E))\ge 2$. Then, for any primitive collection 
$\{y_1,y_2\}\subset\G(\Sigma_{\varphi})$, 
an element $z\in\sigma(\{y_1,y_2\})\cap\G(\Sigma)$ has to have 
$m(z)=0$ by Proposition \ref{keyprop}. Namely, $z\in\G(\Sigma_{\varphi})$. 
So, $|\Sigma_{\varphi}|$ is a subspace in $N_{\mathbb{R}}$, and then 
$\mathcal{S}_2$ is also a subspace in $N_{\mathbb{R}}$. 
Thus, we obtain the following.
\begin{prop}\label{s2str}
Let $X$ be a smooth toric special weak Fano $4$-fold, 
and suppose that $I\subset\mathcal{S}_2$. 
If $\rho(\varphi(E))\ge 2$, then $X$ is isomorphic to a $V$-bundle 
over $\mathbb{P}^1$, where $V$ is the $3$-dimensional 
subvariety corresponding to the complete fan in $\mathcal{S}_2$. 
\end{prop}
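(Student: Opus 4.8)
The plan is to analyze the structure of $\mathcal{S}_2$ under the hypothesis $\rho(\varphi(E)) \ge 2$ and show it becomes a $3$-dimensional subspace, then apply the bundle criterion of Proposition \ref{pcbdl}. First I would recall that the fan $\Sigma_{\varphi}$ associated to $\varphi(E)$ has the property that, since $\rho(\varphi(E)) \ge 2$, every primitive collection of $\Sigma_{\varphi}$ consists of exactly two elements (the surface $\varphi(E)$ being a smooth toric weak del Pezzo surface of Picard number $\ge 2$). So let $\{y_1, y_2\} \subset \G(\Sigma_{\varphi})$ be any primitive collection, and let $z \in \sigma(\{y_1,y_2\}) \cap \G(\Sigma)$. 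The goal of this first step is to force $z \in \G(\Sigma_{\varphi})$: by Proposition \ref{keyprop} the element $z$ lies in the relative interior of $\langle y_1, y_2 \rangle$, hence $z = a_1 y_1 + a_2 y_2$ with $a_1, a_2 > 0$; applying the functional $m$ and using $m(y_1) = m(y_2) = 0$ gives $m(z) = 0$, which by the trichotomy for $m$ recorded just before Proposition \ref{s2str} (namely $m(v) = 0 \iff v \in \G(\Sigma_{\varphi})$, with $m(v) \ge 1$ for $v \in I$ and $m(v) = -1$ on $\{x,x_1,x_2\}$) forces $z \in \G(\Sigma_{\varphi})$.

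Next I would conclude that $|\Sigma_{\varphi}|$ is a $(d-2)$-dimensional subspace of $N_{\mathbb{R}}$, where $d = 4$, so $|\Sigma_{\varphi}|$ is $2$-dimensional. The point is that the condition just verified — that $\sigma(P) \cap \G(\Sigma) \subset \G(\Sigma_{\varphi})$ for every primitive collection $P \subset \G(\Sigma_{\varphi})$ — is exactly the criterion stated at the end of Proposition \ref{pcbdl} for the support of a subfan to be a full-dimensional subspace. (One should check that every primitive collection of $\Sigma$ contained in $\G(\Sigma_{\varphi})$ is already a primitive collection of $\Sigma_{\varphi}$ and conversely, which is immediate from the definition of $\Sigma_{\varphi}$ as a subfan; alternatively, one invokes directly that $\varphi(E) = X_{\Sigma_{\varphi}}$ is a complete surface.) Since $\mathcal{S}_2 = \bigcup_{\sigma \in \Sigma_{\varphi}}(\mathbb{R}_{\ge 0} x + \sigma) \cup \bigcup_{\sigma \in \Sigma_{\varphi}}(\mathbb{R}_{\ge 0}(-x) + \sigma)$ and $|\Sigma_{\varphi}|$ is now a $2$-dimensional subspace not containing $x$ (as $m(x) = -1 \ne 0$), the union $\mathbb{R}x + |\Sigma_{\varphi}|$ is a $3$-dimensional subspace, and $\mathcal{S}_2$ equals this subspace.

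Finally I would apply Proposition \ref{pcbdl} with the decomposition $\G(\Sigma) = G_1 \sqcup G_2$, where $G_2 := \{x, x_1, x_2\} \cup \G(\Sigma_{\varphi})$ is the set of primitive generators lying in $\mathcal{S}_2$ and $G_1 := I$ is its complement (this uses that $I \subset \mathcal{S}_2$ by hypothesis, so actually $G_1 = \emptyset$ — wait, more carefully: since $I \subset \mathcal{S}_2$, all of $\G(\Sigma)$ lies in $\mathcal{S}_2$). Let me instead take $G_2 = \G(\Sigma_{\varphi})$ and $G_1 = \{x, x_1, x_2\} \cup I$: by Theorem \ref{ichiomain}(2), since we are in case (II) and $\{x_1,x_2\}$ is a special primitive collection, every primitive collection $P \ne \{x_1,x_2\}$ satisfies $P \cap \{x_1,x_2\} = \emptyset$; combined with the fact that primitive collections within $\G(\Sigma_{\varphi})$ stay inside $G_2$ and that any primitive collection meeting $I$ or $\{x\}$ must (by Proposition \ref{keyprop} and the structure of $\mathcal{S}_1, \mathcal{S}_2$) lie entirely in $G_1$, the hypothesis of Proposition \ref{pcbdl} is satisfied, giving $X$ the structure of a $V$-bundle over the toric variety associated to $\Sigma_1 := \{\sigma \in \Sigma : \sigma \cap \G(\Sigma) \subset G_1\}$, whose support is $\mathcal{S}_1 = \mathbb{R}x + \mathbb{R}x_1 = \mathbb{P}^1$'s fan, while $V = X_{\Sigma_2}$ with $\Sigma_2$ the complete fan in $\mathcal{S}_2$. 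The main obstacle I anticipate is the bookkeeping in this last step: verifying precisely that the partition $\{G_1, G_2\}$ satisfies the "every primitive collection lies in $G_1$ or in $G_2$" condition of Proposition \ref{pcbdl}, and identifying which of the two subfans has full-dimensional support as a subspace so that the bundle goes the asserted direction (base $\mathbb{P}^1$, fiber $V$) rather than the reverse; the functional $m$ and the trichotomy are the right tools to control the cross-primitive-collections, exactly as in the proof of Proposition \ref{3nenagumi}.
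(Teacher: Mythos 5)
Your first two steps are exactly the paper's argument: the trichotomy for $m$ (namely $m=-1$ on $\{x,x_1,x_2\}$, $m=0$ on $\G(\Sigma_\varphi)$, $m\ge 1$ on $I$ because $I\subset\mathcal{S}_2$), then Proposition \ref{keyprop} forcing $m(z)=0$ for every $z\in\sigma(P)\cap\G(\Sigma)$ when $P\subset\G(\Sigma_\varphi)$ is a two-element primitive collection, hence $|\Sigma_\varphi|$ is a $2$-dimensional subspace and $\mathcal{S}_2=\mathbb{R}x+|\Sigma_\varphi|$ is a $3$-dimensional subspace. That part is fine and is precisely how the paper proceeds.

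The final step, however, contains a genuine error. First, $I\subset\mathcal{S}_2$ does \emph{not} imply $\G(\Sigma)\subset\mathcal{S}_2$: the elements $x_1,x_2$ never lie in $\mathcal{S}_2$ (they cannot, since $\Sigma$ is a complete $4$-dimensional fan while $\mathcal{S}_2$ is only $3$-dimensional; e.g.\ for $\mathcal{Z}_{14}$ one has $\mathcal{S}_2=\{(*,0,*,*)\}$ and $x_1=(-1,1,0,0)\notin\mathcal{S}_2$). Second, your replacement partition $G_2=\G(\Sigma_\varphi)$, $G_1=\{x,x_1,x_2\}\cup I$ cannot yield the claimed statement even if its hypotheses held: the subfan with subspace support there is $\Sigma_\varphi$, so Proposition \ref{pcbdl} would give a $\varphi(E)$-bundle over a surface, not a $V$-bundle over $\mathbb{P}^1$; moreover you have the roles of fiber and base in Proposition \ref{pcbdl} reversed ($\mathcal{S}_1$ is a $2$-dimensional plane, not ``$\mathbb{P}^1$'s fan'', and the base of the asserted bundle comes from the quotient $N_{\mathbb{R}}/\mathcal{S}_2$, not from $\mathcal{S}_1$, which plays no role here). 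The correct way to finish is to take $G_1:=\G(\Sigma)\cap\mathcal{S}_2=\{x\}\cup\G(\Sigma_\varphi)\cup I$ and $G_2:=\{x_1,x_2\}$. The primitive-collection condition of Proposition \ref{pcbdl} then holds because $\{x_1,x_2\}$ is itself a primitive collection and, by Theorem \ref{ichiomain}(2) (we are in the case $I\subset\mathcal{S}_2$, so alternative (2) applies), every other primitive collection is disjoint from $\{x_1,x_2\}$ and hence lies in $G_1$. Since $|\Sigma_1|=\mathcal{S}_2$ is the $3$-dimensional subspace just constructed, Proposition \ref{pcbdl} gives $X$ the structure of an $X_1$-bundle with fiber $X_1=V$, the toric $3$-fold of the complete fan in $\mathcal{S}_2$, and the base is $\mathbb{P}^1$ because in $N_{\mathbb{R}}/\mathcal{S}_2\cong\mathbb{R}$ the images of $x_1,x_2$ satisfy $\overline{x}_1+\overline{x}_2=2\overline{x}=0$.
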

\begin{rem}
In Proposition \ref{s2str}, $V$ has to be either 
a smooth toric Fano $3$-fold or 
a smooth toric special weak Fano $3$-fold. 
In the case of dimension $3$, it was shown by \cite{minagawa} 
and \cite{minagawaweakened} that 
a smooth toric weak Fano $3$-fold is special if and only if it is a 
weakened Fano manifold (also see \cite{sato2}). 
Moreover, if $V$ is a direct product of $\mathbb{P}^1$ and 
a smooth toric weak del Pezzo surface, then $X$ 
is isomorphic to either a direct product of $\mathbb{P}^1$ 
and a smooth toric special weak Fano $3$-fold or a direct 
product of $F_2$ and a smooth toric weak del Pezzo surface. 
We therefore omit these trivial cases.
\end{rem}

Now, we start the classification. First we consider the case where 
$X$ is isomorphic to a $V$-bundle over $\mathbb{P}^1$. 
%We remark that $V$ has to be either a smooth toric Fano $3$-fold or 
%a smooth toric weakened Fano $3$-fold. 
$V$ have a toric fibration 
\[
f:V\to\mathbb{P}^1
\] 
associated to the map $\mathcal{S}_2\to\mathcal{S}_2/|\Sigma_\varphi|$ 
such that $V_t:=f^{-1}(t)\cong \varphi(E)$ for any $t\in\mathbb{C}$. 
The fiber $V_0$ is 
the torus invariant prime divisor associated to 
the cone $\mathbb{R}_{\ge 0}x$ in the subfan $\Sigma_V$ corresponding to $V$, 
while any irreducible component of $V_\infty$ 
is associated to an element in 
\[
\G(\Sigma_V)\setminus\left(\{x\}\cup\G(\Sigma_\varphi)\right).
\] 
If $V_{\infty}$ is also isomorphic to 
$\varphi(E)$, that is, 
\[
\#\left(\G(\Sigma_V)\setminus\left(\{x\}\cup\G(\Sigma_\varphi)\right)\right)=1,
\]
then $V$ is a $\varphi(E)$-bundle over $\mathbb{P}^1$. 

%%%%%%%%%%%%%%%%%%%%%%%%%%%
%%%%%%%%
%%%%%%%%%%%%%%%%%%%%%%%

\underline{The case where $V$ is a smooth toric Fano $3$-fold.} 
We use the notation in the classification list of 
smooth toric Fano $3$-folds in \cite[Propositions 2.5.5, 2.5.6, 2.5.8 and 2.5.9]{bat2}. 
Also, we use the notation $\G(\Sigma_V)=\{v_1,v_2,\ldots\}$, 
thus we have $\G(\Sigma)=\{x_1,x_2,v_1,v_2,\ldots,\}$. 
We remark that $|\Sigma_V|=\mathcal{S}_2$. 
Trivially, $V\cong \mathbb{P}^3$ is impossible. 
\begin{itemize}
\item $V$ is of type $\mathcal{B}$. $\mathcal{B}_4$ is a direct product. 
For $\mathcal{B}_1$ and $\mathcal{B}_2$, there is no 
subvariety which corresponds to $\varphi(E)$. So, the remaining case is 
$\mathcal{B}_3$. The primitive relations of $\mathcal{B}_3$ are 
\[
v_1+v_2=v_3\mbox{ and }v_3+v_4+v_5=0,
\]
where $\G(\Sigma_V)=\{v_1,v_2,v_3,v_4,v_5\}$. 
$f:V\to\mathbb{P}^1$ is induced by 
\[
\mathcal{S}_2\to\mathcal{S}_2
/\left(\mathbb{R}v_3+\mathbb{R}v_4+\mathbb{R}v_5\right).
\] 
So, the possibilities for $x$ are $v_1$ and $v_2$. 
By adding one more primitive relation
\[
x_1+x_2=2v_1,
\]
where $\G(\Sigma)=\{x_1,x_2\}\cup\G(\Sigma_V)$, 
we obtain a new smooth toric special weak Fano $4$-fold $\mathcal{Z}_{14}$. 
The last primitive relation corresponds to a special 
primitive crepant contraction. $\varphi(E)\cong\mathbb{P}^2$ and 
$V$ is a $\varphi(E)$-bundle over $\mathbb{P}^1$. 
In the case where $x=v_2$, we obtain $\mathcal{Z}_{14}$, too. 
%\newpage

\item 
$V$ is of type $\mathcal{C}$. 
$\mathcal{C}_3$ and $\mathcal{C}_4$ is isomorphic to 
$\mathbb{P}^1\times\mathbb{P}^1\times\mathbb{P}^1$ and 
$\mathbb{P}^1\times F_1$, respectively. The primitive relations 
of the remaining cases are as follows: $v_1+v_2=0,$
\begin{table}[htb]
\begin{center}
%\caption{The case where $I\subset\mathcal{S}_1$ and $Y\cong F_2$.}
\begin{tabular}{|r||c|c|c|}
\hline
notation  & $\mathcal{C}_1$ & $\mathcal{C}_2$ & $\mathcal{C}_5$   \\
\hline\hline
$v_3+v_5=$ & $v_1$ & $v_1$ & $v_1$  \\ \hline
$v_4+v_6=$ & $v_1$ & $v_3$  & $v_2$ \\ \hline
\end{tabular}
\end{center}
\end{table}

\noindent
where $\G(\Sigma_V)=\{v_1,v_2,v_3,v_4,v_5,v_6\}$. 
One can easily see that there exist exactly two, one and two toric surface bundle 
structures 
over $\mathbb{P}^1$ for 
$\mathcal{C}_1$, $\mathcal{C}_2$ and $\mathcal{C}_5$, respectively. 
They are induced by the following morphisms: 
\[
\mathcal{S}_2\to\mathcal{S}_2
/\left(\mathbb{R}v_1+\mathbb{R}v_2+\mathbb{R}v_3+\mathbb{R}v_5\right)\ 
(\mbox{for }\mathcal{C}_1,\ \mathcal{C}_2\mbox{ and }\mathcal{C}_5),
\]
\[
\mathcal{S}_2\to\mathcal{S}_2
/\left(\mathbb{R}v_1+\mathbb{R}v_2+\mathbb{R}v_4+\mathbb{R}v_6\right)\ 
(\mbox{for }\mathcal{C}_1\mbox{ and }\mathcal{C}_5).
\]
%For $\mathcal{C}_1$ and $\mathcal{C}_5$, they coincide. 
Thus, we have 
three new smooth toric special weak Fano $4$-folds $\mathcal{Z}_{15}$ 
$(V\cong\mathcal{C}_1)$, $\mathcal{Z}_{16}$ $(V\cong\mathcal{C}_2)$ and 
$\mathcal{Z}_{17}$ $(V\cong\mathcal{C}_5)$, by adding a primitve relation 
\[
x_1+x_2=2v_4
\]
to the above three cases, where $\G(\Sigma)=\{x_1,x_2\}\cup\G(\Sigma_V)$. 
This primitive relation corresponds to a special primitive crepant contraction. 
In every case, we have $\varphi(E)\cong F_1$.

\item $V$ is of type $\mathcal{D}$. The type $\mathcal{D}_1$ is impossible, 
because there does not exist a subvariety corresponding to $\varphi(E)$. 
For $V\cong\mathcal{D}_2$ whose primitive relations are 
\[
v_3+v_6=0,\ v_4+v_6=v_5,\ v_3+v_5=v_4,\ v_1+v_2+v_4=v_3\mbox{ and }
v_1+v_2+v_5=0,
\]
there exists exactly one 
toric fibration to $\mathbb{P}^1$ induced by the morphism
\[
\mathcal{S}_2\to\mathcal{S}_2
/\left(\mathbb{R}v_1+\mathbb{R}v_2+\mathbb{R}v_5\right).
\]
Since there exist three elements $v_3$, $v_4$ and $v_6$ other than $\{v_1,v_2,v_5\}$, 
$V_\infty$ is not isomorphic to $\mathbb{P}^2$, that is, $f$ is not a bundle structure. 
Thus, the possibility for $x$ is only $v_6$, and 
we obtain a smooth toric special weak Fano $4$-fold $\mathcal{Z}_{18}$ with one more 
primitve relation
\[
x_1+x_2=2v_6,
\]
where $\G(\Sigma)=\{v_1,v_2,v_3,v_4,v_5,v_6,x_1,x_2\}$. 
In this case, $\varphi(E)\cong\mathbb{P}^2$. 

\item $V$ is of type $\mathcal{E}$. 
The type $\mathcal{E}$ is a $S_7$-bundle over $\mathbb{P}^1$. In particular, 
$\mathcal{E}_3$ is isomorphic to $\mathbb{P}^1\times S_7$. 
The primitive relations 
of the remaining cases are as follows: $v_2+v_4=0,v_3+v_5=0,v_1+v_3=v_2,
v_2+v_5=v_1,v_1+v_4=v_5,$
\begin{table}[htb]
\begin{center}
%\caption{The case where $I\subset\mathcal{S}_1$ and $Y\cong F_2$.}
\begin{tabular}{|r||c|c|c|}
\hline
notation  & $\mathcal{E}_1$ & $\mathcal{E}_2$ & $\mathcal{E}_4$   \\
\hline\hline
$v_6+v_7=$ & $v_1$ & $v_2$ & $v_3$  \\ \hline

\end{tabular}
\end{center}
\end{table}

\noindent
where $\G(\Sigma_V)=\{v_1,v_2,v_3,v_4,v_5,v_6,v_7\}$. 

For $V\cong\mathcal{E}_1$, 
there exists one toric fibration $V\to\mathbb{P}^1$ associated to 
the projection 
\[
\mathcal{S}_2\to\mathcal{S}_2/\left(\mathbb{R}v_1+\mathbb{R}v_2+
\mathbb{R}v_3+\mathbb{R}v_4+\mathbb{R}v_5\right).
\]
In fact, 
we obtain a smooth special weak Fano $4$-fold by adding the primitive relation 
$x_1+x_2=2v_6$, where $\G(\Sigma)=\G(\Sigma_V)\cup\{x_1,x_2\}$. 
In this case, $\varphi(E)\cong S_7$. 
We denote by $\mathcal{Z}_{19}$ this smooth toric special weak Fano $4$-fold. 

For $V\cong\mathcal{E}_2$, 
there exist two toric fibrations $V\to\mathbb{P}^1$ associated to 
the projections 
\[
\mathcal{S}_2\to\mathcal{S}_2/\left(\mathbb{R}v_1+\mathbb{R}v_2+
\mathbb{R}v_3+\mathbb{R}v_4+\mathbb{R}v_5\right)\mbox{ and }
\mathcal{S}_2\to\mathcal{S}_2/\left(\mathbb{R}v_2+\mathbb{R}v_4+
\mathbb{R}v_6+\mathbb{R}v_7\right).
\]
The first one is 
the case where $\varphi(E)\cong S_7$ and $x_1+x_2=2v_6$, while the second one 
is the case where $\varphi(E)\cong F_1$ and $x_1+x_2=2v_3$, 
where $\G(\Sigma)=\G(\Sigma_V)\cup\{x_1,x_2\}$. 
We denote by $\mathcal{Z}_{20}$ and $\mathcal{Z}_{21}$ these smooth toric special weak Fano $4$-folds, 
respectively. For $\mathcal{Z}_{21}$, the toric fibration $V\to\mathbb{P}^1$ 
has a singular fiber. Namely, $V$ is not an $F_1$-bundle over $\mathbb{P}^1$.  

For $V\cong\mathcal{E}_4$, we have two toric fibrations $V\to\mathbb{P}^1$, too. 
They are associated to the projections
\[
\mathcal{S}_2\to\mathcal{S}_2/\left(\mathbb{R}v_1+\mathbb{R}v_2+
\mathbb{R}v_3+\mathbb{R}v_4+\mathbb{R}v_5\right)\mbox{ and }
\mathcal{S}_2\to\mathcal{S}_2/\left(\mathbb{R}v_3+\mathbb{R}v_5+
\mathbb{R}v_6+\mathbb{R}v_7\right).
\]
The first one is 
the case where $\varphi(E)\cong S_7$ and $x_1+x_2=2v_6$, while the second one 
is the case where $\varphi(E)\cong F_1$ and $x_1+x_2=2v_4$, 
where $\G(\Sigma)=\G(\Sigma_V)\cup\{x_1,x_2\}$. 
We denote by $\mathcal{Z}_{22}$ and $\mathcal{Z}_{23}$ 
these smooth toric special weak Fano $4$-folds, 
respectively. 
We remark that for $\mathcal{Z}_{23}$, 
$V$ is not an $F_1$-bundle over $\mathbb{P}^1$.

\item $V$ is of type $\mathcal{F}$. 
The type $\mathcal{F}$ is a $S_6$-bundle over $\mathbb{P}^1$. In particular, 
$\mathcal{F}_1$ is isomorphic to $\mathbb{P}^1\times S_6$. 
So, let $V\cong\mathcal{F}_2$. 
The primitive relations of $V$ are
\[
v_1+v_3=v_2,\ v_1+v_4=0,\ v_1+v_5=v_6,\ v_2+v_4=v_3,\ v_2+v_5=0,
\]
\[
v_2+v_6=v_1,\ v_3+v_5=v_4,\ v_3+v_6=0,\ v_4+v_6=v_5\mbox{ and }
v_7+v_8=v_1,
\]
where $\G(\Sigma_V)=\{v_1,v_2,v_3,v_4,v_5,v_6,v_7,v_8\}$. 
Thus, we obtain a new smooth toric special weak Fano $4$-fold 
$\mathcal{Z}_{24}$ by adding the primitive relation $x_1+x_2=2v_7$, 
where $\G(\Sigma)=\G(\Sigma_V)\cup\{x_1,x_2\}$. 
In this case, $\varphi(E)\cong S_6$. 
We remark that there exist other 
three toric fibrations $V\to\mathbb{P}^1$ 
whose general fibers are isomorphic to $\mathbb{P}^2$. 
However, they are impossible, 
since %all of them have two singular fibers. 
there is no smooth fiber that could correspond to $V_0$.

\end{itemize}

\underline{The case where $V$ is a smooth toric weakned Fano $3$-fold.} 

We describe the four toric weakened Fano $3$-folds 
$X_3^0,X_4^0,X_4^1,X_5^1$, which are not isomorphic to direct products 
of lower-dimensional varieties as in \cite{sato2}. 
 
\medskip

Put
\[
x_0=(1,0,0),\ x_+=(1,1,0),\ x_-=(1,-1,0),\ y_+=(0,0,1),\ y_-=(0,0,-1),
\]
\[
z_1=(-1,0,1),\ z_2=(-1,0,0).
\]
Then, we have $V\cong X_3^0$ if $\G(\Sigma_V)=\{x_0,x_+,x_-,y_+,y_-,
z_1\}$, while $V\cong X_4^0$ if $\G(\Sigma_V)=\{x_0,x_+,x_-,y_+,y_-,
z_1,z_2\}$ (see Lemma \ref{anticanonical}). 

For $V\cong X_3^0$, 
there exists one toric fibration from $V$ to $\mathbb{P}^1$ associated to the projection 
\[
\mathcal{S}_2\to\mathcal{S}_2/\left(\mathbb{R}x_0+\mathbb{R}y_+
+\mathbb{R}y_-+\mathbb{R}z_1\right).
\]
However, the smooth toric 
weak Fano $4$-fold $X$ obtained by adding a primitive relation 
$x_1+x_2=2x_+$ is not special, because the primitive crepant 
contraction associated to $x_++x_-=2x_0$ is not special. 
In fact, 
for $m'\in M$ associated to $x_++x_-=2x_0$ as in Definition \ref{specon}, 
we have $m'(x_+)=m'(x_-)=m'(x_0)=-1$, while 
$m'(y_+)=m'(y_-)=0$, $m'(x_1)\ge 0$ and $m'(x_2)\ge 0$. 
This contradicts the primitive relation $x_1+x_2=2x_+$. 

%neither $\{x_0,x_+,x_-,z_1\}$ nor $\{y_+,y_-,x_1,x_2\}$ 
%is contained in a $2$-dimensional subspace. 
%This contradicts Theorem \ref{ichiomain} and Proposition \ref{s2str}. 

For $V\cong X_4^0$, we have two toric fibrations $V\to\mathbb{P}^1$ 
associated to 
\[
\mathcal{S}_2\to\mathcal{S}_2/\left(\mathbb{R}x_0+\mathbb{R}y_+
+\mathbb{R}y_-+\mathbb{R}z_1+\mathbb{R}z_2
\right)
\mbox{ and }
\mathcal{S}_2\to\mathcal{S}_2/\left(\mathbb{R}x_0+\mathbb{R}x_+
+\mathbb{R}x_-+\mathbb{R}z_2\right).
\]
Let $X_1$ and $X_2$ be smooth toric weak Fano $4$-folds constructed 
by adding primitive relations 
$x_1+x_2=2x_+$ and $x_1+x_2=2y_-$, respectively. 
$X_1$ is not special as in the case where $V\cong X_3^0$. 
Thus, we obtain one new smooth toric special weak Fano $4$-fold $\mathcal{Z}_{25}:=X_2$. 
In this case, $\varphi(E)\cong F_2$. 

\medskip

Put
\[
x_0=(1,0,0),\ x_+=(1,1,0),\ x_-=(1,-1,0),\ y_+=(0,0,1),\ y_-=(0,1,-1),
\]
\[
z_1=(0,1,0),\ z_2=(-1,0,0),\ z_3=(0,-1,0).
\]
Then, we have $V\cong X_4^1$ if $\G(\Sigma_V)=\{x_0,x_+,x_-,y_+,y_-,
z_1,z_2\}$, while $V\cong X_5^1$ if $\G(\Sigma_V)=\{x_0,x_+,x_-,y_+,y_-,
z_1,z_2,z_3\}$ (see Lemma \ref{anticanonical}). 

For $V\cong X_4^1$, there exists a toric fibration $V\to\mathbb{P}^1$ 
associated to the projection
\[
\mathcal{S}_2\to\mathcal{S}_2/\left(\mathbb{R}x_0+\mathbb{R}x_+
+\mathbb{R}x_-+\mathbb{R}z_1+\mathbb{R}z_2
\right). 
\]
Thus, 
we have a smooth toric 
special weak Fano $4$-fold by adding the primitive relation $x_1+x_2=2y_+$. 
$X$ is isomorphic to a $W_3$-bundle over $F_2$ in this case. 
In fact, 
this manifold is isomorphic to $\mathcal{Z}_{12}$. 

For $V\cong X_5^1$, there exist exactly two 
toric fibrations $V\to\mathbb{P}^1$ 
associated to 
\[
\mathcal{S}_2\to\mathcal{S}_2/\left(\mathbb{R}x_0+\mathbb{R}x_+
+\mathbb{R}x_-+\mathbb{R}z_1+\mathbb{R}z_2+\mathbb{R}z_3
\right)
\mbox{ and }
\]
\[
\mathcal{S}_2\to\mathcal{S}_2/\left(\mathbb{R}y_++\mathbb{R}y_-
+\mathbb{R}z_1+\mathbb{R}z_3
\right).
\]
In the former case, $X$ is a $W_4^1$-bundle over $F_2$ 
and is isomorphic to $\mathcal{Z}_{13}$. 
In the latter case, we have a new smooth toric 
special weak Fano $4$-fold $\mathcal{Z}_{26}$ which has one more primitive relation 
$x_1+x_2=2z_2$. In this case, $\varphi(E)\cong F_1$.

\medskip
%%%%%%%%%%%%%%%%%%%%%%%%%%%%%%%%%%%
%%%%%%%%%%%%%%%%%%%%%%%%%%%%%
%%%%%%%%%%%%%%%%%%%%%%

Finally, we consider the remaining case, that is, 
the case where $\varphi(E)\cong \mathbb{P}^2$ and $V$ 
has no $\varphi(E)$-fibration over $\mathbb{P}^1$. 

In this case, one can easily see that there exist two primitive relations
\[
x_1+x_2=2x\mbox{ and }y_1+y_2+y_3=x_1+z_i,
\]
where $\G(\Sigma_{\varphi})=\{y_1,y_2,y_3\}$ and $I=\{z_1,\ldots,z_l\}$ for some 
$1\le i\le l$ 
(remember that $\G(\Sigma)=\{x_1,x_2,x\}\cup\G(\Sigma_{\varphi})\cup I$). 
We remark that $y_1+y_2+y_3=x+z_i$ is impossible, because 
$\langle x,z_i\rangle\not\in\G(\Sigma)$. 
By \cite[Proposition 3.2]{bat1}, we have one more primitive relation 
\[
x+z_j=0,
\]
since $X$ is projective and 
$\mathbb{R}x$ is the maximum subspace in $S_2$.
We may assume $j=1$. 

By using Proposition \ref{keyprop}, we can prove the following. 
The proof is similar to the one of Proposition \ref{3nenagumi}. 
We therefore omit it. 

\begin{lem}\label{mizuhara}
$\{z_1,\ldots,z_l\}$ is contained in one of the following 
$2$-dimensional cones: 
$\langle z_1,y_1\rangle$, $\langle z_1,y_2\rangle$ or 
$\langle z_1,y_3\rangle$. 
\end{lem}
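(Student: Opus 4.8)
The plan is to follow the two‑stage pattern of the proof of Proposition~\ref{3nenagumi}, but now with the three $2$‑dimensional cones $\langle z_1,y_p\rangle=\langle -x,y_p\rangle$, $p=1,2,3$ (recall $z_1=-x$ from the relation $x+z_1=0$), playing the role that $\mathcal{S}_1$ and $\mathcal{S}_2$ played there. Throughout I would use the values of $m$ on $\G(\Sigma)$ recorded at the start of case~(II) (so $m=-1$ on $\{x,x_1,x_2\}$, $m=0$ on $\G(\Sigma_\varphi)$, $m\ge 1$ on $I$, hence $m(z_1)=1$), the hypothesis $I\subseteq\mathcal{S}_2$, the identity $\mathcal{S}_1\cap\mathcal{S}_2=\mathbb{R}x$, and the fact from the Remark preceding Proposition~\ref{3nenagumi} that $\mathbb{R}_{\ge 0}x+\sigma\in\Sigma$ for every $\sigma\in\Sigma_\varphi$; in particular $x,y_p,y_q$ are linearly independent whenever $\langle y_p,y_q\rangle$ is a $2$‑dimensional cone of $\Sigma_\varphi$.

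First I would show that every $z\in I\setminus\{z_1\}$ lies in some $\langle z_1,y_p\rangle$. As in Proposition~\ref{3nenagumi}, $\{x,z\}$ is a primitive collection (if $\langle x,z\rangle\in\Sigma$ then the definition of $\Sigma_\varphi$ would put $z\in\G(\Sigma_\varphi)$), and $x+z\ne 0$ since $z\ne -x$, so Proposition~\ref{keyprop} gives $z'\in\G(\Sigma)$ in the relative interior of $\langle x,z\rangle$, whence $z\in\langle z_1,z'\rangle$. The possibility $z'\in\{x_1,x_2\}$ would force $z\in\mathcal{S}_1\cap\mathcal{S}_2=\mathbb{R}x$, i.e.\ $z=z_1$, which is excluded; if $z'\in\G(\Sigma_\varphi)$ we are done; if $z'\in I$ we iterate, the process terminating in finitely many steps exactly as in Proposition~\ref{3nenagumi} and producing a nested chain $\langle z_1,z\rangle\subseteq\cdots\subseteq\langle z_1,y_p\rangle$. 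After this reduction we may assume $y_p$ itself lies in the relative interior of $\langle x,z\rangle$, so $ax+bz=y_p$ with $a,b>0$, i.e.\ $z=\lambda z_1+\mu y_p$ with $\lambda=m(z)\ge 1$ and $\mu>0$; and $p$ is uniquely determined by $z$ because $y_p\notin\mathrm{span}(x,y_{p'})$ for $p'\ne p$.

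Next I would prove that the index $p$ is the same for all of $z_1,\dots,z_l$. Suppose not: after the reduction above there are $z,z''\in I$ with $y_p$ in the relative interior of $\langle x,z\rangle$, $y_q$ in that of $\langle x,z''\rangle$, and $p\ne q$; write $ax+bz=y_p$, $a'x+b'z''=y_q$ with all four coefficients positive. Eliminating $x$ gives the all‑positive relation
\[
\frac{b}{a}\,z+\frac{1}{a'}\,y_q=\frac{1}{a}\,y_p+\frac{b'}{a'}\,z'',
\]
so, by the argument used in Propositions~\ref{keyprop} and~\ref{3nenagumi}, one of $\{z,y_q\}$, $\{y_p,z''\}$ fails to span a cone of $\Sigma$ and hence is a primitive collection; say $\{z,y_q\}$. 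Since $m(z+y_q)=m(z)\ge 1$, Proposition~\ref{keyprop} produces $w\in\G(\Sigma)$ in the relative interior of $\langle z,y_q\rangle$. Writing $z=\lambda z_1+\mu y_p$ we get $w=s\lambda z_1+s\mu y_p+ty_q$ with $s,t>0$, so in the basis $z_1,y_p,y_q$ the $y_p$‑ and $y_q$‑coordinates of $w$ are both positive, while $m(w)=s\lambda\ge 1$ shows $w\in I$. By the first step $w$ lies in the relative interior of some $\langle z_1,y_r\rangle$. If $r\in\{p,q\}$, comparing the two coordinate expressions of $w$ kills the $y_q$‑ (resp.\ $y_p$‑) coordinate, a contradiction; if $r$ is the third index, the same comparison, after applying $m$ to remove the $z_1$‑component (recall $m(z_1)=1$, $m(y_p)=m(y_q)=0$), forces $y_r$ into the relative interior of the $2$‑cone $\langle y_p,y_q\rangle\in\Sigma$, contradicting the fan axioms. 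Hence $p=q$; since $z_1$ belongs to all three cones, $\{z_1,\dots,z_l\}\subseteq\langle z_1,y_p\rangle$ for one fixed $p$.

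The main obstacle is the second stage. The first stage is essentially a verbatim transcription of the first half of the proof of Proposition~\ref{3nenagumi}, but the ``uniformity'' step has no analogue there: one must keep careful track of which of the three rays $\mathbb{R}_{\ge 0}y_r$ each newly produced generator sits on, and play linear independence of the relevant subsets of $\{x,y_1,y_2,y_3\}$ against the values of $m$, invoking the fan condition on $\Sigma_\varphi$ to rule out a generator lying in the interior of a $2$‑cone $\langle y_p,y_q\rangle$. Once that bookkeeping is organized, the proof is indeed routine, which is why the statement can be asserted without giving the details.
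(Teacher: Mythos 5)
Your proof is correct and follows exactly the route the paper intends: the paper omits the proof of Lemma~\ref{mizuhara}, indicating only that it uses Proposition~\ref{keyprop} and parallels Proposition~\ref{3nenagumi}, and your two-stage argument (tracing each $z\in I$ down to one of the cones $\langle z_1,y_p\rangle$ via repeated use of Proposition~\ref{keyprop} and $\mathcal{S}_1\cap\mathcal{S}_2=\mathbb{R}x$, then excluding two different indices $p\neq q$ by the ``one of the two pairs is a primitive collection'' trick combined with the $m$-values and the linear independence of $x,y_p,y_q$) is precisely that argument with the omitted bookkeeping supplied. The only cosmetic slip is the assertion $m(w)=s\lambda\ge 1$ (one only knows $m(w)>0$, which already forces $w\in I$), and it does not affect the proof.
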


Without loss of generality, we may assume 
$\{z_1,\ldots,z_l\}\subset \langle z_1,y_1\rangle$ by Lemma 
\ref{mizuhara}. Permute $z_2,\ldots,z_l$ in order of proximity to $z_1$. 
Then, 
$\{z_k,z_{k+2}\}$ is a primitive collection for $1\le k\le l-1$, where 
put $z_{l+1}:=y_1$. If there exists a primive relation 
\[
z_k+z_{k+2}=2z_{k+1}\ (1\le k\le l-1),
\]
then we have a primitive crepant contraction associated to 
this primitive relation, because there are eight 
$4$-dimensional cones 
\[
\langle z_{k+1},z_k,x_1,y_2\rangle,\ \langle z_{k+1},z_k,x_1,y_3\rangle,\ 
\langle z_{k+1},z_k,x_2,y_2\rangle,\ \langle z_{k+1},z_k,x_2,y_3\rangle,
\]
\[
\langle z_{k+1},z_{k+2},x_1,y_2\rangle,\ \langle z_{k+1},z_{k+2},x_1,y_3\rangle,\ 
\langle z_{k+1},z_{k+2},x_2,y_2\rangle
\mbox{ and }\langle z_{k+1},z_{k+2},x_2,y_3\rangle\in\Sigma
\]
which contain $z_{k+1}$. We can construct a divisorial 
contraction by removing $z_{k+1}$ 
by Proposition \ref{contractible}. 
So, this primitive crepant contraction has to be special. 
By Definition \ref{specon} (3), there exists $m'\in M$ such that 
$m'(x_1)=m'(x_2)=m'(y_2)=m'(y_3)=0$. 
However, since $x_1,x_2,y_2,y_3$ spans $N_\mathbb{R}$, this is a contradiction. 
Thus, we have primitive relations $z_k+z_{k+2}=z_{k+1}$ for $1\le k\le l-1$. 
Suppose that $l\ge 3$. By adding two primitive relations 
\[
z_1+z_3=z_2\mbox{ and }z_2+z_4=z_3,
\]
we have another primitive relation $z_1+z_4=0$. This is impossible. 
So, we have $l\le 2$.

If $l=1$, then the primitive relations of $\Sigma$ are 
\[
x_1+x_2=2x,\ y_1+y_2+y_3=x_1+z_1\mbox{ and }x+z_1=0.
\]
One can easily see that $X$ is an $F_2$-bundle over 
$\mathbb{P}^2$ in this case. 
So, this is also in the case where $I\subset\mathcal{S}_1$. 
In fact, $X$ is isomorphic to $\mathcal{Z}_1$. 

Suppose that $l=2$. If $i=1$, then we have the extremal 
primitive relation 
\[
z_2+y_2+y_3=z_1+z_3+y_2+y_3=z_1+y_1+y_2+y_3=2z_1+x_1.
\]
The associated primitive crepant contraction is not special, a contradiction. 
If $i=2$, then the relation 
\[
y_2+y_3=x_1+z_2-y_1=x_1+z_1
\]
implies that either $\{y_2,y_3\}$ or $\{x_1,z_1\}$ is a primitive collection. 
However, Proposition \ref{keyprop} says that 
$\{x_1,z_1\}$ is not a primitive collection, because 
$\G(\Sigma)\cap\mathcal{S}_1=\{x,x_1,x_2,z_1\}$. 
So, we also have a contradiction 
in this case.

Thus, we have no new smooth toric special weak Fano $4$-fold in this case. 
This finishes the classification. 

\begin{thm}\label{4foldthm}
There exist exactly $26$ smooth toric special weak Fano $4$-folds 
which do not decompose into a direct product of 
lower-dimensional varieties. 
\end{thm}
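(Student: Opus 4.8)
The plan is to assemble the case analysis carried out in Section \ref{sec4fold} into a single count. Since $X$ is special but not Fano, it carries at least one special primitive crepant contraction $\varphi$ with extremal primitive relation $x_1+x_2=2x$, and Theorem \ref{ichiomain} together with Proposition \ref{3nenagumi} splits the problem into the two regimes $I\subset\mathcal{S}_1$ and $I\subset\mathcal{S}_2$. In the first regime $X$ is an $S$-bundle over the smooth toric weak del Pezzo surface $\varphi(E)$; Proposition \ref{nihonbashi} forces $S\in\{F_2,W_3,W_4^1\}$ and the lemma preceding it forces $\varphi(E)\in\{\mathbb{P}^2,\mathbb{P}^1\times\mathbb{P}^1,F_1,F_2\}$, so running through the four possibilities for $\varphi(E)$ together with the constraints on the primitive relations over $\G(\Sigma_\varphi)$ recorded in Tables 2--6 produces exactly the thirteen varieties $\mathcal{Z}_1,\dots,\mathcal{Z}_{13}$.

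In the second regime I would first treat $\rho(\varphi(E))\ge 2$: Proposition \ref{s2str} realizes $X$ as a $V$-bundle over $\mathbb{P}^1$, where $V$ is either a smooth toric Fano $3$-fold or one of the four non-product smooth toric weakened Fano $3$-folds $X_3^0,X_4^0,X_4^1,X_5^1$. Going through Batyrev's list of toric Fano $3$-folds type by type (types $\mathcal{B}$, $\mathcal{C}$, $\mathcal{D}$, $\mathcal{E}$, $\mathcal{F}$, with $\mathbb{P}^3$ and the direct products excluded) and, for each, enumerating the toric fibrations $V\to\mathbb{P}^1$ whose general fiber is a weak del Pezzo surface and keeping only those that extend to a \emph{special} weak Fano $4$-fold, yields $\mathcal{Z}_{14},\dots,\mathcal{Z}_{24}$; the four weakened Fano $3$-folds then contribute the further varieties $\mathcal{Z}_{25}$ (from $X_4^0$) and $\mathcal{Z}_{26}$ (from $X_5^1$), while the other fibrations of $X_4^1$ and $X_5^1$ merely reproduce $\mathcal{Z}_{12}$ and $\mathcal{Z}_{13}$, and the fibration of $X_3^0$ produces nothing special. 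The remaining possibility is $\rho(\varphi(E))=1$, i.e.\ $\varphi(E)\cong\mathbb{P}^2$ with $V$ admitting no $\varphi(E)$-bundle structure; here Lemma \ref{mizuhara} confines $I=\{z_1,\dots,z_l\}$ to a single $2$-dimensional cone, a short argument with Proposition \ref{keyprop} and \cite[Proposition 3.2]{bat1} bounds $l\le 2$, and inspecting $l=1$ (which gives back $\mathcal{Z}_1$) and $l=2$ (which is contradictory) shows no new variety arises.

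It then remains to check that these $26$ candidates are genuine and pairwise distinct. For each $\mathcal{Z}_i$ one verifies that the listed set of lattice vectors is $\G$ of a complete smooth fan whose every primitive collection has nonnegative degree (so $X$ is a smooth projective weak Fano $4$-fold) and each of whose degree-zero extremal primitive collections has the form $u_1+u_2=2u$ satisfying the three conditions of Definition \ref{specon} (so $X$ is special), and that $X$ is neither Fano nor a direct product of lower-dimensional varieties --- the last point being built into the way trivial cases were omitted throughout the section. Pairwise distinctness then follows from Lemma \ref{anticanonical}: a smooth toric special weak Fano $d$-fold is determined by $\G(\Sigma)$, so it suffices to observe that the generator sets of $\mathcal{Z}_1,\dots,\mathcal{Z}_{26}$ are pairwise inequivalent under $\operatorname{GL}(4,\mathbb{Z})$, which can be read off from their primitive relations. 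Combining these points gives the stated count.

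The main obstacle, and the part demanding the most care, is the completeness and non-redundancy of the enumeration: one must be sure that every toric fibration $V\to\mathbb{P}^1$ and every admissible primitive relation over $\G(\Sigma_\varphi)$ has been accounted for, that specialness of the resulting $4$-fold is actually checked in each instance (it fails in several, e.g.\ the $4$-folds $X_1$ coming from $X_3^0$, $X_4^0$ and $X_4^1$), and that the coincidences among varieties produced in different branches (such as $\mathcal{Z}_1$ reappearing when $l=1$, or $\mathcal{Z}_{12}$, $\mathcal{Z}_{13}$ reappearing from the weakened Fano $3$-folds) are correctly identified, so that the final tally is exactly $26$ and not an overcount.
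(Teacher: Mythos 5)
Your proposal is correct and follows essentially the same route as the paper: the split via Proposition \ref{3nenagumi}/Theorem \ref{ichiomain} into $I\subset\mathcal{S}_1$ (the $S$-bundle cases $\mathcal{Z}_1$--$\mathcal{Z}_{13}$, constrained by Proposition \ref{nihonbashi}) and $I\subset\mathcal{S}_2$ (the $V$-bundle cases over $\mathbb{P}^1$ run through Batyrev's list and the weakened Fano $3$-folds, plus the residual $\varphi(E)\cong\mathbb{P}^2$ analysis via Lemma \ref{mizuhara}), with distinctness handled by Lemma \ref{anticanonical}. The only quibble is your closing example: the fibration of $X_4^1$ does produce a special $4$-fold (it is just isomorphic to $\mathcal{Z}_{12}$, as you correctly state earlier), so specialness fails only for the $4$-folds built from $X_3^0$ and the first fibration of $X_4^0$.
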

%For the classification, 
%we assume one more assumption $|I|=1$, that is, $\rho(X)=\rho(Y)+2$. 
%In this case, if $\rho(Y)\ge 2$, 
%then $Z$ in Proposition \ref{s2str} is isomorphic to 
%a $Y$-bundle over $\mathbb{P}^1$. 
%\begin{rem}
%If $|I|\ge 2$, $X$ is not necessarily deformed to a toric manifold (see \cite{sato2}). 
%So, we can not apply our theory to this case (see Section \ref{deformsec}).
%\end{rem}

%%%%%%%%%%%%%%%%%%%%%%%%%
%%%%%%%%%%%%%%%%%%%%%%%%%%%%%%%
\section{Deformations of smooth toric special weak Fano 4-folds}\label{deformsec}
%%%%%%%%%%%%%%%%%%%%%%%%%%%%%%%%
%%%%%%%%%%%%%%%%%%%%%%%%%%%
If a smooth toric special weak Fano $4$-fold $X$ is isomorphic to a 
direct product of lower-dimensional varieties, then $X$ is 
obviously a smooth toric weakened Fano $4$-fold. 
So, in this final section, we deal with the remaining cases. Namely, 
we show that smooth toric special weak Fano $4$-folds 
$\mathcal{Z}_{i}$ ($1\le i\le 26$) classified in Section \ref{sec4fold} 
are deformation equivalent to Fano manifolds 
with a few exceptions using the deformation families 
constructed in \cite{laface}. 
We use the notation in Batyrev's classification of smooth toric Fano $4$-folds. 

As examples, we show that $\mathcal{Z}_1$ (the case where $I\subset\mathcal{S}_1$) 
and $\mathcal{Z}_{14}$ (the case where $I\subset\mathcal{S}_2$) 
are deformation equivalent to Fano manifolds. The other cases are 
almost completely similar. However, we cannot apply this technique for the case where 
\[
\#\{v\in\G(\Sigma)\,|\,m(v)>0\}\ge 2
\] 
for a special primitive crepant contraction on $X$, 
because the general fiber of the deformation family in this case is {\em not 
necessarily} a toric 
variety in general (see \cite[Section 4]{sato2}). 
Thus, we cannot determine whether 
smooth toric special weak Fano $4$-folds 
$\mathcal{Z}_{18}$, $\mathcal{Z}_{21}$, $\mathcal{Z}_{23}$, 
$\mathcal{Z}_{25}$ and $\mathcal{Z}_{26}$ are 
deformation equivalent to Fano manifolds using our current techniques. 

\begin{ex}\label{exz1}
Let $X=X_\Sigma$ be the smooth toric special weak Fano $4$-fold of type 
$\mathcal{Z}_1$. Then, $\G(\Sigma)=\{x,x_1,x_2,x_3,y_1,y_2,y_3\}$, where 
\[
x=(-1,0,0,0),\ x_1=(-1,1,0,0),\ x_2=(-1,-1,0,0),\ x_3=(1,0,0,0),
\]
\[
y_1=(0,1,1,0),\ y_2=(0,0,0,1)\mbox{ and }y_3=(0,0,-1,-1).
\]
The primitive relations of $\Sigma$ are 
\[
x_1+x_2=2x,\ x+x_3=0\mbox{ and }y_1+y_2+y_3=x_1+x_3.
\]
In this case, $x_1+x_2=2x$ is an extremal primitive relation associated to 
a primitive special crepant contraction. $m\in M$ as 
in Definition \ref{specon} is the first projection 
$(a_1,a_2,a_3,a_4)\mapsto a_1$. Put $D_0,D_1,D_2,D_3,E_1,E_2,E_3$ 
be the torus invariant prime divisors corresponding to $x,x_1,x_2,x_3,y_1,y_2,y_3$, 
respectively. 
We use the short exact sequence 
\[
0\to M\to \mathbb{Z}^{\G(\Sigma)}\stackrel{\psi}{\to} \Pic (X)\to 0
\]
to find the degree matrix of $X$, that is, 
the matrix associated to the map $\psi$. 
Its kernel is the $4\times 7$ matrix given by the above rays, 
providing us with the relations: 
%By calculating the rational functions for the dual basis of the standard basis for $N$, 
%we obtain 
\[
-D_0-D_1-D_2+D_3=0,\ D_1-D_2+E_1=0,\ E_1-E_3=0\mbox{ and }E_2-E_3=0
\]
in $\Pic(X)$. $\{D_2,D_3,E_3\}$ is a $\mathbb{Z}$-basis for $\Pic(X)$, and 
the other torus invariant prime divisors are expressed as follows:
\[
D_0=-2D_2+D_3+E_3,\ D_1=D_2-E_3\mbox{ and }E_1=E_2=E_3.
\]
With respect to this $\mathbb{Z}$-basis, we have 
\[
      [D_0] 
    =
   \left(
    \begin{array}{c}
      -2  \\
      1   \\
      1 
    \end{array}
  \right),\ 
        [D_1] 
    =
   \left(
    \begin{array}{c}
      1  \\
      0   \\
      -1 
    \end{array}
  \right),\ 
      [D_2] 
    =
   \left(
    \begin{array}{c}
      1  \\
      0   \\
      0 
    \end{array}
    \right),\ 
      [D_3] 
    =
   \left(
    \begin{array}{c}
      0  \\
      1   \\
      0 
    \end{array}
  \right),\ 
\]
\[
      [E_1] 
    =
   \left(
    \begin{array}{c}
      0  \\
      0   \\
      1 
    \end{array}
  \right),\ 
        [E_2] 
    =
   \left(
    \begin{array}{c}
      0  \\
      0   \\
      1 
    \end{array}
  \right)\mbox{ and }
      [E_3] 
    =
   \left(
    \begin{array}{c}
      0  \\
      0   \\
      1 
    \end{array}
    \right),
    \]
where $[D]$ stands for the element in $\mathbb{Z}^3\cong\Pic(X)$ 
corresponding to a torus invariant divisor $D$. 
Thus, the degree matrix of $X$ is 
\[
\left(
    \begin{array}{ccccccc}
      [D_0] & [D_1] & [D_2] & [D_3] & [E_1] & [E_2] & [E_3] 
    \end{array}
  \right)=
   \left(
    \begin{array}{ccccccc}
      -2 & 1 & 1 & 0 & 0 & 0 & 0 \\
      1 & 0 & 0 & 1 & 0 & 0 & 0  \\
      1 & -1 & 0 & 0 & 1 & 1 & 1
    \end{array}
  \right),
\]
where $[D]$ stands for the element in $\mathbb{Z}^3\cong\Pic(X)$ 
corresponding to a torus invariant divisor $D$. 
On the other hand, the combinatorial structure of $\Sigma$ 
is determined by the primitive collections. Namely, 
a subset $S\subset\G(\Sigma)$ generates 
a cone in $\Sigma$ if and only if all of the following hold:
\[
\{x_1,x_2\}\not\subset S,\ \{x,x_3\}\not\subset S
\mbox{ and }\{y_1,y_2,y_3\}\not\subset S.
\]
Then, similarly as in Section 4 in \cite{laface}, 
there exists a one-parameter deformation $\pi:\ \mathcal{X}\to\mathbb{C}$ 
whose every fiber is a toric manifold 
such that $\mathcal{X}_0:=\pi^{-1}(0)\cong X$ and the degree matrix of a general fiber 
$\mathcal{X}_t:=\pi^{-1}(t)$ ($t\neq 0$) is obtained by replacing the first column and 
the fourth column of the one of $X$ by 
\[
[D_0]+[D_2]=
  \left(
    \begin{array}{c}
      -2 \\
      1  \\
      1 
    \end{array}
  \right)+
 \left(
    \begin{array}{c}
      1 \\
      0  \\
      0 
    \end{array}
  \right)=
 \left(
    \begin{array}{c}
      -1 \\
      1  \\
      1 
    \end{array}
  \right)
\mbox{ and }
\]
\[
[D_0]+[D_1]=
  \left(
    \begin{array}{c}
      -2 \\
      1  \\
      1 
    \end{array}
  \right)+
 \left(
    \begin{array}{c}
      1 \\
      0  \\
      -1 
    \end{array}
  \right)=
 \left(
    \begin{array}{c}
      -1 \\
      1  \\
      0 
    \end{array}
  \right),
\]
respectively. 
Here, we should remark that $x_1+x_2=2x$ corresponds to 
a primitive {\em special} crepant contraction. 
Therefore, the degree matrix of $\mathcal{X}_t$ is 
\[
  \left(
    \begin{array}{ccccccc}
      \mbox{\boldmath $-1$} & 1 & 1 & \mbox{\boldmath $-1$} & 0 & 0 & 0 \\
      \mbox{\boldmath $1$} & 0 & 0 & \mbox{\boldmath $1$} & 0 & 0 & 0  \\
      \mbox{\boldmath $1$} & -1 & 0 & \mbox{\boldmath $0$} & 1 & 1 & 1
    \end{array}
  \right).
\]%\sim
By adding the second line to the first line, we obtain the matrix 
\[
    \left(
    \begin{array}{ccccccc}
      \mbox{\boldmath $0$} & 1 & 1 & 0 & 0 & 0 & 0 \\
      1 & 0 & 0 & 1 & 0 & 0 & 0  \\
      1 & -1 & 0 & 0 & 1 & 1 & 1
    \end{array}
  \right).
\]
Then, by comparing this matrix with the degree matrix of $X$, we 
put 
%One can easily see that the fan $\Sigma'$ associated to $\mathcal{X}_t$ is 
%obtained by replacing $x_2\in\G(\Sigma)$ by 
\[
x_2':=(1,-1,0,0).
\]
Then, one can easily construct the fan $\Sigma'$ associated to $\mathcal{X}_t$ 
by replacing $x_2\in\G(\Sigma)$ by $x_2'$. 
Namely, $\G(\Sigma')=\{x,x_1,x_2',x_3,y_1,y_2,y_3\}$, and the combinatorial 
structure of $\Sigma'$ is completely similar as $\Sigma$, that is, 
the primitive relations of $\Sigma'$ are 
\[
x_1+x_2'=0,\ x+x_3=0\mbox{ and }y_1+y_2+y_3=x_1+x_3.
\]
Thus, $\mathcal{X}_t$ is a smooth toric Fano $4$-fold of type $D_9$. 
In particular, $X$ is a smooth toric weakened Fano $4$-fold. 
\end{ex}

\begin{ex}\label{exz14} %%%%%%%%%%%%%%%%%%%%%%%%
Let $X=X_\Sigma$ be the smooth toric special weak Fano $4$-fold of type 
$\mathcal{Z}_{14}$. Then, $\G(\Sigma)=\{x,x_1,x_2,y_1,y_2,y_3,z\}$, where 
\[
x=(-1,0,0,0),\ x_1=(-1,1,0,0),\ x_2=(-1,-1,0,0),
\]
\[
y_1=(0,0,1,0),\ y_2=(0,0,0,1),\ y_3=(0,0,-1,-1)\mbox{ and }z=(1,0,1,0).
\]
The primitive relations of $\Sigma$ are 
\[
x_1+x_2=2x,\ y_1+y_2+y_3=0\mbox{ and }x+z=y_1.
\]
Put $D_0,D_1,D_2,E_1,E_2,E_3,F$ 
be the torus invariant prime divisors corresponding to $x,x_1,$
$x_2,y_1,y_2,y_3,z$, 
respectively. 
We use the short exact sequence 
\[
0\to M\to \mathbb{Z}^{\G(\Sigma)}\stackrel{\psi}{\to} \Pic (X)\to 0
\]
to find the degree matrix of $X$, that is, 
the matrix associated to the map $\psi$. 
Its kernel is the $4\times 7$ matrix given by the above rays, 
providing us with the relations: 
\[
-D_0-D_1-D_2+F=0,\ D_1-D_2=0,\ E_1-E_3+F=0\mbox{ and }E_2-E_3=0
\]
in $\Pic(X)$. $\{D_2,E_3,F\}$ is a $\mathbb{Z}$-basis for $\Pic(X)$, and 
the other torus invariant prime divisors are expressed as follows:
\[
D_0=-2D_2+F,\ D_1=D_2,\ E_1=E_3-F\mbox{ and }E_2=E_3.
\]
With respect to this $\mathbb{Z}$-basis, we have 
\[
      [D_0] 
    =
   \left(
    \begin{array}{c}
      -2  \\
      0   \\
      1 
    \end{array}
  \right),\ 
        [D_1] 
    =
   \left(
    \begin{array}{c}
      1  \\
      0   \\
      0 
    \end{array}
  \right),\ 
      [D_2] 
    =
   \left(
    \begin{array}{c}
      1  \\
      0   \\
      0 
    \end{array}
    \right),\ 
      [E_1] 
    =
   \left(
    \begin{array}{c}
      0  \\
      1   \\
      -1 
    \end{array}
  \right),\ 
\]
\[
      [E_2] 
    =
   \left(
    \begin{array}{c}
      0  \\
      1  \\
      0 
    \end{array}
  \right),\ 
        [E_3] 
    =
   \left(
    \begin{array}{c}
      0  \\
      1   \\
      0 
    \end{array}
  \right)\mbox{ and }
      [F] 
    =
   \left(
    \begin{array}{c}
      0  \\
      0   \\
      1 
    \end{array}
    \right).
    \]
Thus, the degree matrix of $X$ is 
\[
\left(
    \begin{array}{ccccccc}
      [D_0] & [D_1] & [D_2]  & [E_1] & [E_2] & [E_3] & [F]
    \end{array}
  \right)=
   \left(
    \begin{array}{ccccccc}
      -2 & 1 & 1 & 0 & 0 & 0 & 0 \\
      0 & 0 & 0 & 1 & 1 & 1 & 0  \\
      1 & 0 & 0 & -1 & 0 & 0 & 1
    \end{array}
  \right). 
\]
There exists a one-parameter deformation $\pi:\ \mathcal{X}\to\mathbb{C}$ 
such that $\mathcal{X}_0\cong X$ and the degree matrix of a general fiber 
$\mathcal{X}_t$ ($t\neq 0$) is obtained by replacing the first column and 
the last column of the one of $X$ by 
\[
[D_0]+[D_2]=
  \left(
    \begin{array}{c}
      -2 \\
      0  \\
      1 
    \end{array}
  \right)+
 \left(
    \begin{array}{c}
      1 \\
      0  \\
      0 
    \end{array}
  \right)=
 \left(
    \begin{array}{c}
      -1 \\
      0  \\
      1 
    \end{array}
  \right)
\mbox{ and }
\]
\[
[D_0]+[D_1]=
  \left(
    \begin{array}{c}
      -2 \\
      0  \\
      1 
    \end{array}
  \right)+
 \left(
    \begin{array}{c}
      1 \\
      0  \\
      0 
    \end{array}
  \right)=
 \left(
    \begin{array}{c}
      -1 \\
      0  \\
      1 
    \end{array}
  \right),
\]
respectively. Therefore, the degree matrix of $\mathcal{X}_t$ is 
\[
  \left(
    \begin{array}{ccccccc}
      \mbox{\boldmath $-1$} & 1 & 1 & 0 & 0 & 0 & \mbox{\boldmath $-1$} \\
      \mbox{\boldmath $0$} & 0 & 0 & 1 & 1 & 1 & \mbox{\boldmath $0$}   \\
      \mbox{\boldmath $1$} & 0 & 0 & -1 & 0 & 0 & \mbox{\boldmath $1$} 
    \end{array}
  \right).%\sim 
\]
By adding the third line to the first line, we obtain the matrix 
\[  
    \left(
    \begin{array}{ccccccc}
      \mbox{\boldmath $0$} & 1 & 1 & \mbox{\boldmath $-1$} & 0 & 0 & 0 \\
      0 & 0 & 0 & 1 & 1 & 1 & 0  \\
      1 & 0 & 0 & -1 & 0 & 0 & 1
    \end{array}
  \right).
\]
Then, by comparing this matrix with the degree matrix of $X$, we put 
\[
x_2':=(1,-1,1,0).
\]
Then, one can easily construct the fan $\Sigma'$ associated to $\mathcal{X}_t$ 
by replacing $x_2\in\G(\Sigma)$ by $x_2'$. 
Namely, $\G(\Sigma')=\{x,x_1,x_2',y_1,y_2,y_3,z\}$, and the combinatorial 
structure of $\Sigma'$ is completely similar as $\Sigma$, that is, 
the primitive relations of $\Sigma'$ are 
\[
x_1+x_2'=y_1,\ y_1+y_2+y_3=0\mbox{ and }x+z=y_1.
\]
Thus, $\mathcal{X}_t$ is a smooth toric Fano $4$-fold of type $D_7$. 
In particular, $X$ is a smooth toric weakened Fano $4$-fold. 
\end{ex}

Here, we summarize the central fibers $\mathcal{X}_0$ and 
the general fibers $\mathcal{X}_t$ for one-parameter 
deformation families $\mathcal{X}\to\mathbb{C}$ as in 
Examples \ref{exz1} and \ref{exz14}. 
%which smooth toric $4$-fold is 
%a smooth toric special weak Fano $4$-fold deformed to. 
%We use the notation in Batyrev's classification of smooth toric Fano $4$-folds. 

\begin{table}[htb]
\begin{center}
%\caption{The case where $I\subset\mathcal{S}_1$ and $Y\cong F_2$.}
\begin{tabular}{|c||c|c|c|c|c|c|c|c|c|c|c|c|c|}
\hline
special weak Fano %$4$-fold 
& $\mathcal{Z}_1$ & $\mathcal{Z}_2$ & $\mathcal{Z}_3$ 
& $\mathcal{Z}_4$ & $\mathcal{Z}_5$ & $\mathcal{Z}_6$ 
& $\mathcal{Z}_7$ & $\mathcal{Z}_8$ & $\mathcal{Z}_9$ & $\mathcal{Z}_{10}$ 
& $\mathcal{Z}_{11}$ & $\mathcal{Z}_{12}$ & $\mathcal{Z}_{13}$\\
\hline
deformed to & $D_9$ & $H_4$ & $H_1$ & $H_{10}$ & $K_3$ & $K_1$ 
& $Q_1$ & $U_1$ & $U_8$ & $Q_2$ & $U_2$ & $\mathcal{Z}_{19}$ & 
$\mathcal{Z}_{24}$ \\ \hline

\end{tabular}
\end{center}
\end{table}

\begin{table}[htb]
\begin{center}
%\caption{The case where $I\subset\mathcal{S}_1$ and $Y\cong F_2$.}
\begin{tabular}{|c||c|c|c|c|c|c|c|c|}
\hline
special weak Fano %$4$-fold 
& $\mathcal{Z}_{14}$ & $\mathcal{Z}_{15}$ & $\mathcal{Z}_{16}$ 
& $\mathcal{Z}_{17}$ & $\mathcal{Z}_{19}$ & $\mathcal{Z}_{20}$ 
& $\mathcal{Z}_{22}$ & $\mathcal{Z}_{24}$ \\
\hline
deformed to & $D_7$ & $L_1$ & $L_2$ & $L_{13}$ & $Q_1$ & $Q_3$ 
& $Q_{13}$ & $U_1$  \\ \hline

\end{tabular}
\end{center}
\end{table}

By combining with the case of direct products, we obtain the following:

\begin{thm}\label{deformmain}
Let $X$ be a smooth toric special weak Fano $4$-fold. 
If $X$ is not isomorphic to $\mathcal{Z}_{18}$, $\mathcal{Z}_{21}$, $\mathcal{Z}_{23}$, $\mathcal{Z}_{25}$ or $\mathcal{Z}_{26}$, 
then $X$ is deformation equivalent to a toric Fano manifold. 
Moreover, if $X$ is not isomorphic to $\mathcal{Z}_{12}$ or $\mathcal{Z}_{13}$, 
then $X$ is a smooth toric weakened Fano $4$-fold.
\end{thm}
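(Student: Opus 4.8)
The plan is to assemble Theorem~\ref{deformmain} from the pieces already developed, treating it essentially as a bookkeeping statement over the classification in Theorem~\ref{4foldthm}. First I would dispose of the reducible case: if $X$ decomposes as a direct product of lower-dimensional toric special weak Fano (or Fano) varieties, then one factor must be a toric weakened variety of dimension $2$ or $3$, which is deformation equivalent to a Fano by the classical surface case together with Minagawa's results (\cite{minagawa}, \cite{minagawaweakened}, \cite{sato2}); deforming only that factor and keeping the product structure produces a one-parameter deformation to a Fano manifold, and since the deforming factor is not Fano while the others are, the total space is a genuine weakened Fano. This reduces everything to the $26$ manifolds $\mathcal{Z}_1,\ldots,\mathcal{Z}_{26}$.

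Next I would treat the list $\mathcal{Z}_i$ case by case, exactly as illustrated in Examples~\ref{exz1} and~\ref{exz14}. For each $i$ outside the excluded set $\{18,21,23,25,26\}$, the key observation is that the associated special primitive crepant contraction $x_1+x_2=2x$ satisfies $\#\{v\in\G(\Sigma)\mid m(v)>0\}\le 1$, so the Laface–Süß-type construction in \cite{laface} yields a one-parameter deformation $\pi:\mathcal{X}\to\mathbb{C}$ \emph{all of whose fibers are toric}. Concretely: compute the degree matrix of $X_\Sigma$ from the fundamental exact sequence, replace the column $[D_0]$ by the two columns $[D_0]+[D_2]$ and $[D_0]+[D_1]$ (the first-order deformations coming from the ray $x$ splitting off the $\mathbb{P}^1$-bundle), and identify the resulting degree matrix with that of a smooth toric Fano $4$-fold from Batyrev's list. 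The two summary tables record the outcome; I would simply verify each entry by this degree-matrix computation (or, equivalently, by checking that replacing $x_2$ with the indicated new primitive generator $x_2'$ turns every primitive relation of degree $0$ into one of positive degree, so that $-K$ becomes ample). For $\mathcal{Z}_{12}$ and $\mathcal{Z}_{13}$ the target of the deformation is not Fano but $\mathcal{Z}_{19}$ and $\mathcal{Z}_{24}$ respectively; chaining with the deformation of those to $Q_1$ and $U_1$ shows $\mathcal{Z}_{12}$ and $\mathcal{Z}_{13}$ are deformation equivalent to toric Fano $4$-folds, but because the intermediate member is still a non-Fano weak Fano one cannot conclude the stronger ``weakened'' property in a single parameter, which is exactly why they are excepted from the second assertion.

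The genuinely delicate point is verifying that the deformation $\pi$ produced by the \cite{laface} recipe is flat with smooth total space and with central fiber \emph{isomorphic} (not merely abstractly deformation equivalent) to $X$, and that the general fiber has the claimed fan $\Sigma'$. This is where the condition $\#\{v\mid m(v)>0\}\le 1$ is essential: it guarantees that the relevant piece of $H^1(X,T_X)$ is spanned by deformations that remain inside the class of toric varieties, so the general fiber is again toric with an explicitly computable fan. For the five exceptional manifolds $\mathcal{Z}_{18},\mathcal{Z}_{21},\mathcal{Z}_{23},\mathcal{Z}_{25},\mathcal{Z}_{26}$ this fails — the general fiber need not be toric — and the method gives nothing, so they are simply omitted from the statement. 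I expect the main obstacle to be purely organizational: carrying out the degree-matrix identification uniformly for all $21$ admissible cases and matching them against the Batyrev labels in the tables without error; each individual case is a short linear-algebra computation of the type spelled out in Examples~\ref{exz1} and~\ref{exz14}, and no new idea is needed beyond those two model computations.
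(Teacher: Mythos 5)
Your proposal follows essentially the same route as the paper: dispose of direct products via the lower-dimensional results, then run the Laface--Melo degree-matrix deformation case by case exactly as in Examples~\ref{exz1} and~\ref{exz14} for the $21$ admissible $\mathcal{Z}_i$, chain $\mathcal{Z}_{12}\rightsquigarrow\mathcal{Z}_{19}\rightsquigarrow Q_1$ and $\mathcal{Z}_{13}\rightsquigarrow\mathcal{Z}_{24}\rightsquigarrow U_1$ for the first assertion while excepting them from the second, and exclude the five cases with $\#\{v\in\G(\Sigma)\mid m(v)>0\}\ge 2$ where the general fiber need not be toric. The only small correction: in the product case both factors can be non-Fano (e.g.\ $F_2\times F_2$), so one should deform all non-Fano factors simultaneously over the same disc rather than ``only that factor''; this is a trivial fix and does not affect the argument.
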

\begin{rem}
$\mathcal{Z}_{12}$ and $\mathcal{Z}_{13}$ become toric Fano manifolds 
after two deformations
\[
\mathcal{Z}_{12}\sim\mathcal{Z}_{19}\sim Q_1\mbox{ and }
\mathcal{Z}_{13}\sim\mathcal{Z}_{24}\sim U_1, 
\]
respectively. However, we do not know if they 
deform to Fano manifolds {\em directly}. 
\end{rem}
%%%%%%%%%%%%%%%%%%%%%%%%%%%%%%%%%
%%%%%%%%%%%%%%%%%%%%%%%%%%%%%%

\end{document}